\newtheorem{theorem}{Theorem}[section]
\newtheorem{corollary}[theorem]{Corollary}
\newtheorem{lemma}[theorem]{Lemma}
\numberwithin{equation}{section}
\theoremstyle{definition}
\newtheorem*{remark}{Remark}
\newtheorem*{remarks}{Remarks}
\newtheorem{example}[theorem]{Example}
\newtheorem{examples}[theorem]{Examples}
\newcommand{\cK}{\mathcal{K}}
\newcommand{\cP}{\mathcal{P}}
\newcommand{\Irr}{\mathrm{Irr}}
\newcommand{\Ind}{\mathrm{Ind}}
\newcommand{\dd}{\displaystyle}
\newcommand{\cN}{\mathcal{N}}
\newcommand{\cI}{\mathcal{I}}
\newcommand{\FF}{\mathbb{F}}
\newcommand{\f}{\mathrm{f}}
\newcommand{\Cl}{\mathtt{Cl}}
\newcommand{\Ch}{\mathtt{Ch}}
\newcommand{\tS}{\mathsf{S}}
\newcommand{\CC}{\mathbb{C}}
\newcommand{\GL}{\mathrm{GL}}
\newcommand{\cL}{\mathcal{L}}
\newcommand{\Res}{\mathrm{Res}}
\newcommand{\spanning}{\text{-span}}
\newcommand{\One}{1\hspace{-.13cm}1}
\newcommand{\cA}{\mathcal{A}}
\newcommand{\cB}{\mathcal{B}}
\newcommand{\cC}{\mathcal{C}}
\newcommand{\cM}{\mathcal{M}}
\newcommand{\gco}{\mathrm{gco}}
\newcommand{\lcm}{\mathrm{lcm}}
\newcommand{\anti}{\mathrm{AntCh}}
\newcommand{\subsp}{\mathrm{subsp}}
\newcommand{\bl}{\mathrm{bl}}
\newcommand{\Hom}{\mathrm{Hom}}
\title{The structure of normal lattice \\ supercharacter theories}
\date{}
\author{Farid Aliniaeifard and Nathaniel Thiem}
\begin{document}
	
	\maketitle

\begin{abstract}
The character theory of finite groups has numerous basic questions that are often already quite involved: enumerating of irreducible characters, their character formulas, point-wise product decompositions, and restriction/induction between groups.  A supercharacter theory is a framework for simplifying the character theory of a finite group, while ideally not losing all important information.  This paper studies one such theory that straddles the gap between retaining valuable group information while reducing the above fundamental questions to more combinatorial lattice constructions.   
\end{abstract}

\section{Introduction} \label{Introduction}

Through the work of Andr\'e \cite{And95} and Yan \cite{Yan01}, supercharacter theory has allowed us to apply the tools of character theory to groups previously deemed intractable.  The more general framework developed by Diaconis and Isaacs \cite{DI08} further fleshes out a theory that can be adapted to different characteristics one might wish to emphasize in groups (eg. if one wants to study real-valued characters).   However, the construction and existence of supercharacter theories remains somewhat mysterious.  There are some basic techniques that apply to all groups, and in some surprising cases (eg. $\mathrm{Sp}_6(2)$ \cite{BLLW}) these give a complete understanding.  Nevertheless, in most cases we do not have a good understanding of what supercharacter theories are possible.    

In his thesis work, Aliniaeifard \cite{AL17} developed an alternate approach that centers on the set of normal subgroups.  A standard result in character theory is that knowledge of the characters identifies all normal subgroups of the group; these form a lattice under inclusion.  It turns out that every supercharacter theory identifies a sublattice of normal subgroups, and this naturally partitions supercharacter theories according to the sublattices they ``see."   Aliniaeifard identified the unique coarsest supercharacter theory corresponding to each sublattice, and identified numerous desirable characteristics exhibited by this theory.  

This point of view naturally leads to a notion of ``simple" supercharacter theory, or one which only identifies the trivial subgroup and the whole group (eg. simple groups only have simple supercharacter theories).  Burkett \cite{Bu} showed that there is a Jordan--H\"older type factorization for supercharacter theories into simple supercharacter theories, and has developed a framework for super versions of various chains of normal subgroups.

This paper examines the notion of a normal lattice supercharacter theory a bit more closely, tying the character theory to the underlying lattice in a more explicit fashion.  There are numerous problems in character theory that generally have difficult solutions, such as explicit character formulas for the irreducible characters, decompositions of tensor products (eg. Kronecker products as in \cite{BB}) and restrictions between groups, etc.  It turns out that many of these problems have elegant solutions in the case of normal lattice supercharacter theories, and this paper would like to make the case that while these theories  are  non-trivial, they retain high levels of computability.

We begin with a review of the necessary lattice notation and an introduction to supercharacter theories.  We then proceed to Corollary \ref{CharacterFormula} which gives an explicit multiplicative character formula for the supercharacters, reminiscent of the combinatorial character formulas found in \cite{Yan01} for the maximal unipotent subgroups of $\GL_n(q)$.  Under some additional hypotheses that guarantee some level of generality, we also give a decomposition of the point-wise product in Corollary \ref{TensorProduct}.

In the case that the normal subgroups form a distributive lattice, we can say even more.  An advantage of the normal lattice supercharacter theories is that they are somewhat more canonical, analogous to how every group has a partition by conjugacy classes.  This feature allows us to better compare supercharacter theories of groups related via inclusion.  We conclude with a description of restriction between finite groups $H\subseteq G$ in Theorem \ref{RestrictionTheorem}, where we explicitly decompose the restricted supercharacter theory in terms of the supercharacter theory of the subgroup.  This result includes some natural compatibility conditions suggested by the underlying theory.

The original motivation for this work comes out of the supercharacter theory of non-nesting partitions for pattern groups \cite{An15}.  However, to keep this paper in a more manageable form, we have separated the applications to these groups into a companion paper \cite{AT18}.  In this paper we instead illustrate the theory with several families of abelian groups including cyclic groups and vector spaces.  

\section{Preliminaries} 

This section will fix some of the standard notation on lattices in Section \ref{Lattices}, and in Section \ref{SupercharacterTheory} we review both the definition of supercharacter theories and the construction of the normal lattice supercharacter theory as defined by \cite{AL17}.

\subsection{Lattices of normal subgroups} \label{Lattices}

The goal of this section is to fix notation for lattices with a particular emphasis on lattices of normal subgroups of a finite group.  

A \textbf{\emph{lattice}} $\cL$ is a poset such that for each pair $(K,L)\in \cL\times \cL$ there is a unique least upper bound $K\vee L$ and a unique greatest lower bound $K\wedge L$.  We say an element $L\in \cL$ \textbf{\emph{covers}} $K\in \cL$ if $L$ is minimal with the property that $K\prec_{\cL} L$ and $K\neq L$.  Given $L\in \cL$, let
\begin{equation}
\cC(L)= \{O\in \cL\mid O\text{ covers } L\}.
\end{equation}

Given a poset $\cP$, let
\begin{equation*}
J^\vee(\cP)=\{I\subseteq \cP\mid i \in I, j\in \cP, i\prec_\cP j\text{ implies } j\in I\}
\end{equation*}
be the lattice of co-ideals,  ordered by inclusion.

A \textbf{\emph{distributive lattice}} $\cL$ is a lattice such that for $K,L,M\in \cL$, 
$$K\vee(L\wedge M)=(K\vee L)\wedge (K\vee M).$$
An element $K\in \cL$ is \textbf{\emph{meet irreducible}} if $|\cC(K)|= 1$.   
The fundamental theorem for finite distributive lattices says that $\cL\cong J^\vee(\cM)$ where $\cM$ is the subposet of meet irreducible elements of $\cL$.    This implies that given any $K\in \cL$ there exists a unique antichain $\cA$ of $\cM$ such that 
$$K=\bigwedge_{A\in \cA} A.$$ 
Define
$$\anti(\cL)=\{\text{anti-chains in $\cL$}\}.$$

Our main example of lattices are sublattices of normal subgroups of a finite group $G$.  Specifically, let
$$\ker(G)=\{N\triangleleft G\}$$
be the lattice ordered by inclusion.  In this case, given $M,N\in \ker(G)$,
$$M\vee N=MN\qquad \text{and} \qquad M\wedge N=M\cap N.$$
A \textbf{\emph{sublattice}} $\cN$ of $\ker(G)$ will be a subset such that 
\begin{enumerate}
\item[(L1)]  $\{1\}, G\in \cN$,
\item[(L2)]  $M,N\in \cN$ implies $MN,M\cap N\in \cN$.
\end{enumerate}
We have that by (L2), every sublattice is modular, so if $M\subseteq N$, then
$$ML\cap N= M(L\cap N).$$

For a subset $\cA\subseteq \ker(G)$, let
\begin{equation}
\overline{\cA} = \prod_{N\in \cA} N \qquad \text{and}\qquad
\underline{\cA} = \bigcap_{N\in \cA} N. \label{OverUnderline}
\end{equation}

\begin{examples}\hfill

\vspace{.25cm}

\noindent\textbf{Cyclics.} Let $C_n=\langle x\rangle$ be a cyclic group.  Then $\ker(C_n)$ is isomorphic to the lattice of divisors of $n$.  For example,
\begin{equation*}\begin{tikzpicture}[baseline=1.5cm]
\foreach \w/\x/\y/\z in {1/0/0/{C_1},2/-1/1/{C_2},3/1/1/{C_3},4/-1/2/{C_4}, 6/1/2/{C_6}, 12/0/3/{C_{12}}}
	\node (\w) at (\x,\y) {$\z$};
\draw (1) -- (2) -- (4) -- (12) -- (6) -- (3) -- (1);
\draw (2) -- (6);
\end{tikzpicture}\quad \longleftrightarrow \quad 
\begin{tikzpicture}[baseline=1.5cm]
\foreach \w/\x/\y/\z in {1/0/0/{1},2/-1/1/{2},3/1/1/{3},4/-1/2/{4}, 6/1/2/{6}, 12/0/3/{12}}
	\node (\w) at (\x,\y) {$\z$};
\draw (1) -- (2) -- (4) -- (12) -- (6) -- (3) -- (1);
\draw (2) -- (6);
\end{tikzpicture}\ .
\end{equation*}
 In general,
$$C_a\cap C_b=C_{\gcd(a,b)}\quad \text{and}\quad C_a C_b=C_{\lcm(a,b)},$$
and $\ker(C_n)$ is a distributive lattice with meet irreducible elements
\begin{equation}\label{CyclicIrreducibles}
\{C_m\mid m<n,n/m\text{ is a prime power}\}.
\end{equation}

\vspace{.5cm}

\noindent\textbf{Vector spaces.}  Let $\FF_q$ be the finite field with $q$ elements, and $V$ an $\FF_q$-module.  Then $V$ is a finite (elementary) abelian group.  While the usual character theory of $V$ sees all the normal subgroups of $V$, we would prefer some sublattices that respect the vector space structure.  Thus,
$$\subsp(V)=\{U\subseteq V\mid U\text{ an $\FF_q$-submodule}\},$$
 and for a fixed basis $\cB\subseteq V$,
 $$\subsp_\cB(V)=\{\FF_q\spanning\{a\in\cA\}\mid \cA\subseteq \cB\},$$
 which is isomorphic to the lattice of subsets of $\cB$.
 
 The lattice $\subsp(V)$ is not distributive in general, but $\subsp_\cB(V)$ is distributive with meet irreducible elements
 $$\{\FF_q\spanning\{a\in \cB\mid a\neq b\}\mid b\in \cB\}.$$
 \end{examples}

\subsection{Supercharacter theories} \label{SupercharacterTheory}

Supercharacter theories give us a framework for simplifying the character theory of a groups while maintaining the representation theoretic underpinnings.  While one can view them as central Schur rings, this section outlines the more representation theoretic framework introduced by Diaconis--Isaacs \cite{DI08}.

Given a set partition $\cK$ of $G$, let
$$\f(G;\cK)=\{\psi:G\rightarrow \CC\mid \{g,h\}\subseteq K\in \cK\text{ implies } \psi(g)=\psi(h)\}$$
be the set of functions constant on the blocks of $\cK$.

A \textbf{\emph{supercharacter theory}} $\tS$ of a finite group $G$ is a pair $(\Cl(\tS),\Ch(\tS))$ where $\Cl(\tS)$ is a set partition of $G$ and $\Ch(\tS)$ is a set partition of the irreducible characters $\Irr(G)$ of $G$, such that
\begin{enumerate}
\item[(SC1)] $\{1\}\in \Cl(\tS)$,
\item[(SC2)] $|\Cl(\tS)|=|\Ch(\tS)|$,
\item[(SC3)] For each $X\in \Ch(\tS)$,
$$\sum_{\psi\in X}\psi(1)\psi\in \f(G;\Cl(\tS)).$$
\end{enumerate}  

We refer to the blocks of $\Cl(\tS)$ as the \textbf{\emph{superclasses}} of $\tS$, and the elements of 
$$\{\sum_{\psi \in X}\psi(1)\psi\mid X\in \Ch(\tS) \}$$
 as \textbf{\emph{supercharacters}} of $\tS$.   In fact, the supercharacters of $\tS$ will form an orthogonal basis for $\f(G;\Cl(\tS))$; in particular, the superclasses are unions of conjugacy classes. 
 
The trivial examples of supercharacter theories have partitions 
\begin{align*} (\Cl,\Ch)&=\Big(\{\text{conjugacy classes}\},\{\{\psi\}\mid \psi\in \Irr(G)\Big)\\
(\Cl,\Ch)&=\Big(\{\{1\},G-\{1\}\},\{ \{\One\} ,\Irr(G)-\{\One\}\Big),
\end{align*}
where $\One$ is the trivial character of $G$.  

Given a group $G$ with a supercharacter theory $\tS$, we call a normal subgroup \textbf{\emph{$\tS$-normal}} if it is a union of superclasses.  These normal subgroups can also be defined as intersections of kernels of the corresponding supercharacters \cite{M11}, and the set
$$\ker(\tS) = \{N\triangleleft G\mid N\text{ a union of superclasses}\}$$
forms a sublattice of all normal subgroups \cite{AL17}.   Note that the trivial supercharacter theory of conjugacy classes gives all normal subgroups $\ker(G)$.   More generally, given an arbitrary sublattice of normal subgroups of $G$, one might ask which supercharacter theories (if any) see at least those normal subgroups.  The following supercharacter theory defined in \cite{AL17} gives the coarsest such supercharacter theory for each sublattice of normal subgroups.

\begin{theorem}[\cite{AL17}]\label{NormalLatticeSupercharacterTheory}
 Let $\cN\subseteq \ker(G)$ be a sublattice.   
 \begin{enumerate}
 \item[(a)] The partitions
$$\Cl(\tS(\cN))=\{N_\circ\neq \emptyset\mid N\in \cN\},\quad \text{where} \quad N_\circ=\{g\in N\mid g\notin M\in \cN, \text{ if $N\in \cC(M)$}\},$$
and 
$$\Ch(\tS(\cN))=\{X^{N^\bullet}\neq \emptyset\mid N\in \cN\},\quad \text{where} \quad  X^{N^\bullet}=\{\psi\in \Irr(G)\mid N\subseteq \ker(\psi)\nsupseteq O\in \cC(N)\}$$
define a supercharacter theory $\tS(\cN)$ with $\cN\subseteq \ker(\tS(\cN))$.
\item[(b)] If $\tS$ is any other supercharacter theory of $G$ with $\cN\subseteq \ker(\tS)$, then $\ker(\tS(\cN))\subseteq \ker(\tS).$
\end{enumerate}
\end{theorem}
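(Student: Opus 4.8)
The plan is to verify (SC1)–(SC3) together with $\cN\subseteq\ker(\tS(\cN))$ for part (a), and then to deduce (b) from the observation that every $\tS(\cN)$-superclass is a Boolean combination of members of $\cN$. I would first check that $\Cl(\tS(\cN))$ and $\Ch(\tS(\cN))$ really are partitions. Since $\cN$ is closed under intersection by (L2), each $g\in G$ lies in a unique smallest subgroup $N(g)=\bigcap\{N\in\cN\mid g\in N\}\in\cN$, and $g\in N_\circ$ holds exactly when $N=N(g)$: if instead $g\in N'\subsetneq N$ with $N'\in\cN$, a maximal element $M$ of $\{L\in\cN\mid N'\subseteq L\subsetneq N\}$ satisfies $N\in\cC(M)$ and $g\in M$, so $g\notin N_\circ$. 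Thus the nonempty $N_\circ$ partition $G$, and $\{1\}_\circ=\{1\}$ gives (SC1). Dually, since $\cN$ is closed under products, each $\psi\in\Irr(G)$ has a largest $N^\ast(\psi)\in\cN$ with $N^\ast(\psi)\subseteq\ker\psi$, and $\psi\in X^{N^\bullet}$ exactly when $N=N^\ast(\psi)$, so the nonempty $X^{N^\bullet}$ partition $\Irr(G)$. The decomposition $N=\bigsqcup_{M\in\cN,\,M\subseteq N}M_\circ$ then exhibits each $N\in\cN$ as a union of superclasses, which is the containment $\cN\subseteq\ker(\tS(\cN))$.

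The analytic heart is (SC3). Writing $\sigma_N=\sum_{\psi\in X^{N^\bullet}}\psi(1)\psi$ and, for $L\in\cN$, $\tau_L=\sum_{\psi:\,L\subseteq\ker\psi}\psi(1)\psi$, inflating the regular character of $G/L$ yields the identity $\tau_L=|G/L|\,\mathbf{1}_L$, where $\mathbf{1}_L$ is the indicator of $L$. Because $X^{N^\bullet}$ consists of those $\psi$ with $N\subseteq\ker\psi$ but $O\nsubseteq\ker\psi$ for every $O\in\cC(N)$, and because a subset $S\subseteq\cC(N)$ satisfies $O\subseteq\ker\psi$ for all $O\in S$ iff $\overline S=\prod_{O\in S}O\in\cN$ (using (L2)) lies in $\ker\psi$, inclusion–exclusion over $S\subseteq\cC(N)$ gives $\sigma_N=\sum_{S\subseteq\cC(N)}(-1)^{|S|}\tau_{\overline S}$, hence $\sigma_N(g)=\sum_{S\subseteq\cC(N)}(-1)^{|S|}|G/\overline S|\,[\,g\in\overline S\,]$. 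Finally $[\,g\in\overline S\,]=[\,N(g)\subseteq\overline S\,]$, since $\overline S\in\cN$ and $N(g)$ is the smallest member of $\cN$ containing $g$; thus $\sigma_N$ depends only on the superclass of $g$, which is (SC3), and the displayed formula is precisely the explicit character formula of Corollary \ref{CharacterFormula}.

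For (SC2) I would count dimensions inside $V:=\f(G;\Cl(\tS(\cN)))$, which has dimension $|\Cl(\tS(\cN))|$ with basis $\{\mathbf 1_{N_\circ}\mid N_\circ\neq\emptyset\}$. From $\mathbf 1_N=\sum_{M\subseteq N}\mathbf 1_{M_\circ}$, Möbius inversion over $\cN$ shows $\{\tau_N\mid N\in\cN\}$ spans $V$. Partitioning $\{\psi\mid N\subseteq\ker\psi\}$ according to $N^\ast(\psi)$ gives the relation $\tau_N=\sum_{N'\supseteq N}\sigma_{N'}$, which is unitriangular over $\cN$, so $\{\sigma_N\mid N\in\cN\}$ spans the same space $V$; and the nonzero members among them, namely those with $X^{N^\bullet}\neq\emptyset$, are pairwise orthogonal, hence a basis of $V$. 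Comparing, $|\Ch(\tS(\cN))|=\dim V=|\Cl(\tS(\cN))|$, completing (a).

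Part (b) is then short. Given any supercharacter theory $\tS$ with $\cN\subseteq\ker(\tS)$, both $N$ and each $M$ with $N\in\cC(M)$ lie in $\cN\subseteq\ker(\tS)$, hence are unions of $\tS$-superclasses; since the $\Cl(\tS)$-saturated subsets of $G$ form a Boolean algebra, the difference $N_\circ=N\setminus\bigcup_{M:\,N\in\cC(M)}M$ is again a union of $\tS$-superclasses. Therefore any normal subgroup that is a union of $\tS(\cN)$-superclasses is a union of $\tS$-superclasses, i.e.\ $\ker(\tS(\cN))\subseteq\ker(\tS)$. I expect the main obstacle to be the inclusion–exclusion step of (SC3): one must recognize that intersecting the kernel conditions $O\subseteq\ker\psi$ corresponds to passing to the join $\overline S$ in $\cN$, and that the resulting alternating sum collapses to a superclass function through $\tau_L=|G/L|\,\mathbf 1_L$. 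The secondary delicate point is the dimension bookkeeping in (SC2), where one must verify that $\{\tau_N\}$ and $\{\sigma_N\}$ (indexed by all of $\cN$, including indices with empty blocks) span the same space before extracting a basis.
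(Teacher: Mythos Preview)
The paper does not actually prove this theorem: it is stated with attribution to \cite{AL17} and used as background, with no proof supplied here. So there is no ``paper's own proof'' to compare against; what can be said is that your sketch is a correct, self-contained argument establishing (a) and (b), essentially reconstructing the original proof from \cite{AL17} via M\"obius/inclusion--exclusion on $\cN$ and a dimension count.

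Two minor points. First, in your inclusion--exclusion $\sigma_N=\sum_{S\subseteq\cC(N)}(-1)^{|S|}\tau_{\overline S}$ you should read $\overline{\emptyset}=N$ (equivalently, write $\tau_{N\overline S}$ throughout); otherwise the $S=\emptyset$ term is wrong. Second, the parenthetical claim that ``the displayed formula is precisely the explicit character formula of Corollary~\ref{CharacterFormula}'' is not accurate: Corollary~\ref{CharacterFormula} assumes $\cC(M)$ is in general position and gives a \emph{product} formula, whereas what you have derived is the alternating-sum expression (this is the M\"obius formula the paper quotes from \cite[Theorem~3.4]{AL17} just after the examples in Section~\ref{CharacterFormulas}, specialized to subsets of covers). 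This does not affect your verification of (SC3), but the cross-reference should be dropped or corrected.
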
 
Note that \cite{AL17} also provides recursive supercharacter formulas for the supercharacters
$$\chi^{N^\bullet}=\sum_{\psi\in X^{N^\bullet}} \psi(1)\psi,$$
and proves a number of nice properties (eg. these supercharacters are integer valued).  Just as a normal subgroup is a union of superclasses, for each normal subgroup $N\triangleleft G$ there is a natural character 
\begin{equation}\label{SubgroupCharacterDecomposition}
\chi^N=\sum_{O\supseteq N} \chi^{O^\bullet}=\sum_{\psi\in \Irr(G)\atop N\subseteq \ker(\psi)} \psi(1)\psi
\end{equation}
that is a ``union" of supercharacters with character formula
\begin{equation}\label{SubgroupCharacterFormula}
\chi^N(g)=\left\{\begin{array}{@{}ll}
|G/N| & \text{if $g\in N$,}\\
0 & \text{otherwise.}
\end{array}\right.
\end{equation}

\begin{examples}\hfill

\vspace{.25cm}

\noindent\textbf{Cyclics.}
Let $C_n=\langle x\rangle$ be a cyclic group.  Then for $d\mid n$,
\begin{align*}
(C_d)_\circ &=\{x^{jn/d}\mid  \text{order of $x^{jn/d}$ is $d$}\},\\
X^{C_d^\bullet} &=\{\psi:C_n\rightarrow \CC^\times\text{ homomorphism}\mid \text{order of $\psi(x)$ is $n/d$}\}.
\end{align*}
Note that in this supercharacter theory, superclasses and supercharacters are indexed by divisors of $n$, so for $d\mid n$ write 
$$\chi^d=\chi^{C_d^\bullet}.$$
Note that this supercharacter theory was used (though not defined in this way) to study Ramanujan sums in \cite{FGK}.

\vspace{.5cm}

\noindent\textbf{Vector spaces.}  For the lattice $\subsp(V)$, we have that for $U\in \subsp(V)$
$$U_\circ=\left\{\begin{array}{ll} \{0\}  & \text{if $\dim(U)=0$,}\\ U-\{0\} & \text{if $\dim(U)=1$},\\
\emptyset & \text{otherwise.}\end{array}\right.$$
and 
\begin{align*}
X^{U^\bullet}&=\left\{\begin{array}{ll} \{ \One \} & \text{if $U=V$,}\\
\{\psi\in \Hom(U,\FF_q^+)\mid U=\ker(\psi)\} & \text{if $\dim(U)=\dim(V)-1$,}\\ 
\emptyset & \text{otherwise.}\end{array}\right.
\end{align*}
In this case, the superclasses and supercharacters are indexed by the number of one and zero dimensional spaces of $V$ or 
$$1+\frac{q^n-1}{q-1}$$
if $\dim(V)=n$.

On the other hand, for the lattice $\subsp_\cB(V)$, we have that for $A\subseteq \cB$
$$\FF_q\spanning\{a\in A\}_\circ=\Big\{\sum_{a\in A} c_a a\mid c_a\neq 0, a\in A\Big\},$$
and 
$$X^{\FF_q\spanning\{a\in A\}^\bullet}=\{\psi\in  \Hom(U,\FF_q^+)\mid \ker(\psi)\cap \cB=A\}.$$
 In this case, the superclasses and supercharacters are indexed by subsets of $\cB$, so for $A\subseteq \cB$ write
$$\chi^A=\chi^{\FF_q\spanning\{a\in A\}^\bullet}.$$
\end{examples}

\section{Normal lattice theories}

This section includes our main results.  We begin in Section \ref{Supermodules} by constructing the dual lattice of $\ker(\tS)$ in terms $G$-modules.  This gives a recursive construction for the modules corresponding to the supercharacters in $\tS$.  We use the module structure to deduce several degree sum results in Section \ref{CharacterFormulas}, allowing us also to obtain an explicit character formula.  In Section \ref{TensorProducts} we find a decomposition for the point-wise product in some situations, and we conclude in Section \ref{RestrictionFormula} with a decomposition of the restriction of supercharacters under some beneficial assumptions.

\subsection{Supermodules}\label{Supermodules}

Given a normal subgroup $N\subseteq G$, we obtain the permutation module
$$\Ind_N^G(\One)\cong \CC G\otimes_{\CC N} \One\cong \CC G e_N,\qquad \text{where $e_N=\frac{1}{|N|}\sum_{n\in N} n$.}$$
From the last isomorphism, we may view each of these modules as submodules of the regular module $\CC G$ (the permutation module coming from the normal subgroup $\{1\}$).    Note that by (\ref{SubgroupCharacterFormula}),
$$\chi^N=\mathrm{tr}(\cdot, \CC G e_N).$$

Given a lattice of normal subgroups $\ker(\tS)$ we obtain a corresponding lattice of modules 
$$\ker(\tS)^\vee=\{\CC G e_N\mid N\in \ker(\tS)\}$$
ordered by containment of modules.  Since $M\subseteq N$ implies 
$$e_Ne_M=e_N$$
we have $\CC Ge_N\subseteq \CC Ge_M$.  On the other hand, if $\CC Ge_N\subseteq \CC  Ge_M$, then $e_{MN}=e_Me_N=e_N$, so $M\subseteq N$.   Thus,
$$\CC Ge_N\vee \CC Ge_M=\CC Ge_{N\cap M}\qquad \text{and}\qquad \CC Ge_N\wedge \CC Ge_M=\CC Ge_{NM}.$$
Define $G^{M^\bullet}$ by
\begin{align*}
\CC G e_M &\cong \Big(\sum_{N\supset M} \CC G e_N\Big)\oplus G^{M^\bullet}\\
&=\Big(\sum_{N\text{ covers } M} \CC G e_N\Big)\oplus G^{M^\bullet}.
\end{align*}
Then by the character decomposition formula (\ref{SubgroupCharacterDecomposition}) of $\chi^M$, 
$$\chi^{M^{\bullet}}=\mathrm{tr}(\cdot, G^{M^\bullet}).$$
In particular, taking dimensions, we obtain 
$$\chi^{M^{\bullet}}(1)=|G/M|-\dim\Big( \sum_{M\text{ covers } N} \CC G e_N\Big).$$ 
The next result gives a better understanding of modules that arise in this way.  Given a subgroup $N\subseteq G$, the notation $\widehat{G/N}$ will denote a transversal for the cosets with $1\in  \widehat{G/N}$.  Also, recall (\ref{OverUnderline}) for definition of $\overline{\cA}$.

\begin{lemma} \label{DecompositionLemma}
Let $\ker(\tS)$ be a lattice for a supercharacter theory of $G$.  Fix $M\in \ker(\tS)$  and let $\cA$ be an antichain in the interval  $[M,G]$ that satisfies $\overline{\cA-O}\neq \overline{\cA}$ for all $O\in \cA$.  
Let
$$\CC G e_M=\sum_{N\in \cA} \CC G e_{N} \oplus V.$$
Then 
\begin{enumerate}
\item[(a)] There exists a choice of transversals $\widehat{N/M}$ and $\widehat{G/\overline{\cA}}$  such that
$$V=\CC\spanning\{g\prod_{N\in \cA}(1-b_N)e_M\mid g\in \widehat{G/\overline{\cA}}, b_N\in \widehat{N/M}-\{1\}\}.$$
\item[(b)] If $\overline{\cA-O}\cap O=M$ for all $O\in \cA$, then
$$\dim(V)=|G/\overline{\cA}| \prod_{N\in \cA} (|N/M|-1).$$
\end{enumerate}
\end{lemma}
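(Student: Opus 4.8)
The plan is to exploit that each $e_N$ is a \emph{central} idempotent of $\CC G$, so that $\CC G e_N$ is a two-sided ideal and the lattice $\ker(\tS)^\vee$ sits inside the Boolean lattice of central idempotents. Decomposing $\CC G$ into its blocks, one has $\CC G e_N=\bigoplus_{N\subseteq\ker(\psi)}B_\psi$, where $B_\psi$ is the block of $\psi\in\Irr(G)$, because $e_N$ acts as $1$ on $B_\psi$ exactly when $N\subseteq\ker(\psi)$ and as $0$ otherwise. Since kernels are subgroups, $\CC Ge_N\cap\CC Ge_{N'}=\CC Ge_{NN'}$ and $\sum_{N\in\cA}\CC Ge_N=\CC Gf$ for the central idempotent $f=1-\prod_{N\in\cA}(1-e_N)$. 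Hence the canonical $\CC G$-submodule complement to $\sum_{N\in\cA}\CC Ge_N$ inside $\CC Ge_M$ is
$$V=\CC Ge_M\prod_{N\in\cA}(1-e_N),$$
and this identification drives both parts.

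For part (a), I would first pass to $\overline G=G/M$: since $M$ acts trivially, $\CC Ge_M$ is the regular representation of $\overline G$, which transports everything to the case $M=1$ (with $\widehat{N/M}=N$) and clarifies the bookkeeping. Using $e_N=\tfrac{1}{|N/M|}\sum_{b\in\widehat{N/M}}b\,e_M$ one gets $e_M-e_N=\tfrac{1}{|N/M|}\sum_{b\in\widehat{N/M}-\{1\}}(1-b)e_M$, and multiplying the factors $e_M-e_N$ out (legitimate since $e_M$ is central idempotent, so $e_M\prod_N(1-e_N)=\prod_N(e_M-e_N)$) writes the generator of $V$ as a $\CC$-combination of the elements $\prod_{N\in\cA}(1-b_N)e_M$ with $b_N\in\widehat{N/M}-\{1\}$. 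Each such element lies in $V$: since $e_{N_0}$ is central and $b_{N_0}e_{N_0}=e_{N_0}$, one computes $\prod_N(1-b_N)e_M\cdot e_{N_0}=0$ for every $N_0\in\cA$, so it is annihilated by $f$. Thus $V$ is the $\CC$-span of $\{g\prod_N(1-b_N)e_M\}$ as $g$ ranges over all of $G$. To cut $g$ down to a transversal $\widehat{G/\overline{\cA}}$, the key claim is that the span $W$ of the $g=1$ generators is stable under left multiplication by $\overline{\cA}=\prod_{N\in\cA}N$; as $\overline\cA$ is generated by the $N\in\cA$ it suffices to treat $n_0\in N_0$. Conjugating $n_0$ rightward past the factors $(1-b_N)$ with $N\ne N_0$ replaces each $b_N$ by $n_0b_Nn_0^{-1}\in N$ (normality keeps it a legal index), while the $N_0$-factor becomes $n_0(1-b_{N_0})=(1-n_0b_{N_0})-(1-n_0)$, a combination of legal factors in $N_0$; expanding gives $n_0W\subseteq W$, hence $\overline\cA\,W\subseteq W$. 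Writing $g=g_0h$ with $g_0\in\widehat{G/\overline{\cA}}$ and $h\in\overline\cA$ then collapses the $G$-indexed span onto the $\widehat{G/\overline{\cA}}$-indexed one, proving (a).

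For part (b), the hypothesis $\overline{\cA-O}\cap O=M$ for all $O\in\cA$ is exactly the condition that the subgroups $N/M$ generate their internal direct product in $G/M$, so $\overline{\cA}/M\cong\prod_{N\in\cA}N/M$ and $\CC[\overline{\cA}/M]\cong\bigotimes_{N\in\cA}\CC[N/M]$, under which the $g=1$ generator $\prod_N(1-b_N)e_M$ corresponds to the pure tensor $\bigotimes_N(1-b_N)$. Since $N/M$ is a basis of $\CC[N/M]$, the family $\{1-b\mid b\in\widehat{N/M}-\{1\}\}$ is linearly independent, and a tensor product of linearly independent families is linearly independent; so the elements $\prod_N(1-b_N)e_M$ are independent. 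Finally the translates by distinct $g\in\widehat{G/\overline{\cA}}$ lie in distinct $\overline{\cA}$-cosets and cannot interfere, so the spanning set of (a) is in fact a basis of $V$, and counting it yields $\dim(V)=|G/\overline{\cA}|\prod_{N\in\cA}(|N/M|-1)$.

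The main obstacle is the transversal reduction in (a): because distinct normal subgroups in $\cA$ need not commute elementwise, one cannot simply factor $\overline\cA$ as a product, and it is the conjugation argument—relying on normality to keep the shifted elements inside the correct subgroups—that makes the collapse to $\widehat{G/\overline{\cA}}$ go through. The standing hypothesis $\overline{\cA-O}\ne\overline{\cA}$ ensures no index is spurious, and its strengthening in (b) is precisely what promotes the spanning set to a basis via the direct-product/tensor decomposition.
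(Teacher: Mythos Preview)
Your argument is correct, and it takes a genuinely different route from the paper's. The paper orders $\cA=\{N_1,\ldots,N_\ell\}$ and builds an explicit filtration $\CC Ge_M=W_1\oplus\cdots\oplus W_\ell\oplus V_\ell$ by successively peeling off pieces $W_j$ (spanned by elements $g(1-n_1)\cdots(1-n_{j-1})e_{\widehat{N_j/M}}e_M$), then checks both inclusions $\sum_j\CC Ge_{N_j}\subseteq\bigoplus_jW_j$ and $\bigoplus_jW_j\subseteq\sum_j\CC Ge_{N_j}$ by hitting with the various $e_{N_j}$. You instead go straight to the identification $V=\CC Ge_M\prod_{N\in\cA}(1-e_N)$ via the Boolean algebra of central idempotents, then expand each factor $e_M-e_N$ as an average of $(1-b)e_M$'s and verify membership by the annihilation $(1-b_{N_0})e_{N_0}=0$. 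Your approach is cleaner and more conceptual: it explains \emph{why} the complement has this particular form, and as a byproduct it works for any choice of transversals, not just the nested ones the paper constructs. The paper's filtration, on the other hand, makes visible the intermediate pieces $W_j$, which could be useful if one wanted finer information about $\sum_{N\in\cA'}\CC Ge_N$ for subsets $\cA'\subseteq\cA$. For part~(b) both arguments come down to the internal direct product $\overline{\cA}/M\cong\prod_NN/M$; the paper simply asserts the spanning set is then a basis, while you spell out the tensor decomposition explicitly. One small imprecision in your write-up: when you move $n_0\in N_0$ rightward, you only conjugate past the factors \emph{preceding} the $N_0$-slot in the chosen ordering, not all factors with $N\neq N_0$; the factors after the $N_0$-slot remain untouched once the split $n_0(1-b_{N_0})=(1-n_0b_{N_0})-(1-n_0)$ has absorbed $n_0$. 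This does not affect the validity of the argument.
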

\begin{proof} For the proof, order the elements of $\cA=\{N_1,N_2,\ldots N_\ell\}$.

(a)
Fix coset representatives $\widehat{G/M}$ with $1\in \widehat{G/M}$  and for each $1\leq j\leq \ell$ let 
\begin{enumerate} 
\item[(a)] $\widehat{N_j/M}\subseteq \widehat{G/M}$,
\item[(b)] $\widehat{G/N_1\cdots N_j}$ be equivalence class representatives the pre-image equivalence relation in $\widehat{G/M}$ coming from the canonical projection $\pi_{G/N_1\cdots N_j}$. 
\end{enumerate} 
Choose the representatives in (b) such that $1\in \widehat{G/N_1\cdots N_j}$ and 
$$\widehat{G/N_1\cdots N_j}\supseteq \widehat{G/N_1\cdots N_{j+1}}.$$
Define $V_1$ by
$$\CC G e_M=\CC G e_{N_1}\oplus V_1.$$
The natural basis of $\CC G e_M$ is
$$\{gn_1e_M\mid g\in \widehat{G/N_1}, n_1\in \widehat{N_1/M}\}.$$
Since 
$$\CC Ge_{N_1}=\CC\spanning\{ge_{\widehat{N_1/M}} e_M\mid g\in \widehat{G/N_1}\},$$
we have 
\begin{align*}
V_1 &=\Big\{\sum_{g\in \widehat{G/N_1}\atop n_1\in \widehat{N_1/M}} c_{g,n_1} gn_1e_M\mid \sum_{n_1\in \widehat{N_1/M}} c_{g,n_1}=0, g\in \widehat{G/N_1}\}\\
&=\CC \spanning\{ g(1-n_1)e_M\mid g\in \widehat{G/N_1}, n_1\in \widehat{N_1/M}-\{1\}\}.
\end{align*}
Define $W_j$ and $V_j$ by
$$V_{j-1}=W_j\oplus V_j.$$
where 
$$W_j=\CC\spanning\{g(1-n_1)(1-n_2)\cdots(1-n_{j-1})e_{\widehat{N_j/M}}e_M\mid g\in \widehat{G/N_1\cdots N_j},n_i\in \widehat{N_i/M}-\{1\}\},$$
and
$$V_j=\CC\spanning\{g(1-n_1)(1-n_2)\cdots(1-n_j)e_M\mid g\in \widehat{G/N_1\cdots N_j},n_i\in \widehat{N_i/M}-\{1\}\}.$$
Then
$$\CC G e_M=W_1\oplus W_2\oplus \cdots \oplus W_\ell \oplus V_\ell.$$
Note that 
\begin{align*}
\CC G e_{N_j} &=W_1e_{N_j}\oplus W_2e_{N_j}\oplus \cdots \oplus W_\ell e_{N_j}\oplus V_\ell e_{N_j}\\
&= W_1e_{N_j}\oplus W_2e_{N_j} \oplus \cdots \oplus W_j e_{N_j}\\
&\subseteq W_1\oplus \cdots \oplus W_\ell.
\end{align*}
Thus,
$$\sum_{j=1}^\ell \CC G e_{N_j}\subseteq W_1\oplus \cdots \oplus W_\ell.$$
Conversely, for each $W_j$ there exists $e_{N_j}$ such that
$$W_je_{N_j}=W_j.$$
Thus,
$$\sum_{j=1}^\ell \CC G e_{N_j}\supseteq W_1\oplus \cdots \oplus W_\ell$$
and
$$V_\ell=V.$$

(b) If $\overline{\cA-O}\cap O=M$ for all $O\in \cA$, then
$$\{g(1-n_1)(1-n_2)\cdots(1-n_\ell)e_M\mid g\in \widehat{G/\overline{\cA}}, n_j\in \widehat{N_j/M}-\{1\}\}$$
is a basis, and so we get the corresponding dimension formula.
\end{proof}

\begin{remarks} \hfill

\begin{enumerate}
\item[(1)] The key idea of this lemma is that modules allow us to express overlap between modules in a way that characters are ill equipped to do. \item[(2)] The main case we are interested in applying Lemma \ref{DecompositionLemma} is when $\cA\subseteq \cC(M)$.  In this case, the condition $\overline{\cA-\{O\}}\cap O=M$ for all $O\in \cA$ is true if and only if $\overline{\cA-\{O\}}\neq \overline{\cA}$ for $O\in \cA$.     If this condition holds for all $O\in A$, then we say $\cA$ is in \textbf{\emph{general position}} over $M$.

In fact, if we are in a distributive lattice, then distributivity implies  $\overline{\cA-\{O\}}\cap O=M$ for all $O\in \cA$ is always true when $\cA\subseteq \cC(M)$.
\end{enumerate}
\end{remarks}

\subsection{Character formulas}\label{CharacterFormulas}

This section works out some general character formulas that are mostly direct consequences of Lemma \ref{DecompositionLemma}.  The first result examines a general character degree sum that carves out a piece of a lattice of normal subgroups $\ker(\tS)$.  

\begin{theorem}\label{DegreeSumTheorem}
 Let $\ker(\tS)\subseteq \ker(G)$ be a lattice of normal subgroups.  Let $K,L,M\in \ker(\tS)$. Assume the set
$$\cC_L^\perp(K,M) = \{O\in \cC(KM)\mid O\cap L\neq K\}$$
is in general position over $KM$.
Then
$$\sum_{N\supseteq M\atop N\cap L=K} \chi^{N^{\bullet}}(1)=\left\{\begin{array}{ll} 
|G/KM| & \text{if $\cC_L^\perp(K,M)=\emptyset$,}\\
\dd |G/\overline{\cC_L^\perp(K,M)}| \prod_{O\in \cC_L^\perp(K,M)}(|O/KM|-1) & \text{if $KM\cap L=K$, $\cC_L^\perp(K,M)\neq \emptyset$,}\\
0 & \text{if $KM\cap L\neq K$.}
\end{array}\right.$$
\end{theorem}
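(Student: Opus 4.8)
The goal is to compute the degree sum $\sum_{N\supseteq M,\, N\cap L=K}\chi^{N^\bullet}(1)$, and the strategy is to reorganize this sum into a single degree computation to which Lemma \ref{DecompositionLemma} applies. The key observation is the character decomposition formula (\ref{SubgroupCharacterDecomposition}), which expresses $\chi^N=\sum_{O\supseteq N}\chi^{O^\bullet}$; inverting this, a sum of the $\chi^{N^\bullet}(1)$ over $N$ ranging above a fixed subgroup collapses into a single $\chi^{(\cdot)}(1)=|G/(\cdot)|$ term minus the contributions from the covering subgroups. First I would dispose of the degenerate case $KM\cap L\neq K$: here there is no $N\supseteq M$ with $N\cap L=K$ at all, since any such $N$ contains $KM$ and hence $N\cap L\supseteq KM\cap L\supsetneq K$, forcing the sum to be empty and equal to $0$. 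This uses only that $K\subseteq N\cap L$ automatically (as $K=N\cap L$ requires $K\subseteq N$, and since $M\subseteq N$ we get $KM\subseteq N$), together with modularity of the sublattice.

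For the main case $KM\cap L=K$, I would first reduce to computing $\sum_{N\supseteq KM,\, N\cap L=K}\chi^{N^\bullet}(1)$, noting that the conditions $N\supseteq M$ and $N\cap L=K$ together force $N\supseteq KM$ (since $K\subseteq N\cap L\subseteq N$ and $M\subseteq N$), so the index sets coincide. The plan is then to identify the subgroups $N\in[KM,G]$ satisfying $N\cap L=K$ as exactly those $N$ that avoid containing any cover $O\in\cC(KM)$ with $O\cap L\neq K$—this is precisely the set $\cC_L^\perp(K,M)$. I would argue that the degree sum equals $\dim$ of the orthogonal complement $V$ in the decomposition $\CC Ge_{KM}=\sum_{O\in\cC_L^\perp(K,M)}\CC Ge_O\oplus V$, because summing $\chi^{O^\bullet}=\mathrm{tr}(\cdot,G^{O^\bullet})$ over the relevant $N$ corresponds, via the recursive supermodule construction in Section \ref{Supermodules}, to the piece of $\CC Ge_{KM}$ not captured by the modules $\CC Ge_O$ for the excluded covers. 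Once this identification is made, part (b) of Lemma \ref{DecompositionLemma}—applicable since $\cC_L^\perp(K,M)$ is assumed in general position over $KM$, which by Remark (2) gives $\overline{\cA-O}\cap O=KM$—yields directly the dimension $|G/\overline{\cC_L^\perp(K,M)}|\prod_{O}(|O/KM|-1)$. The sub-case $\cC_L^\perp(K,M)=\emptyset$ means no cover needs to be removed, so $V=\CC Ge_{KM}$ and the sum is simply $|G/KM|$.

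The main obstacle I expect is the combinatorial bookkeeping in the second paragraph: precisely justifying that $\{N\in[KM,G]:N\cap L=K\}$ is the complement of the covers lying in $\cC_L^\perp(K,M)$, and that the corresponding supercharacter degrees sum to $\dim V$ rather than some over- or under-counted quantity. The delicate point is that $N\cap L=K$ is not literally ``$N$ contains no $O\in\cC_L^\perp(K,M)$''; rather I must show these are equivalent using the lattice structure—specifically that $N\cap L=K$ holds iff for every cover $O$ of $KM$ with $O\subseteq N$ one has $O\cap L=K$, which requires modularity (invoked via the identity $ML\cap N=M(L\cap N)$ stated after (L2)) to propagate the intersection condition from covers up to $N$. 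Verifying this equivalence carefully, and confirming that the telescoping of $\chi^{N^\bullet}(1)$ over this index set matches the module decomposition exactly, is where the real work lies; the final dimension count is then a routine application of Lemma \ref{DecompositionLemma}(b).
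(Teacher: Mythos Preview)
Your proposal is correct and follows essentially the same route as the paper: dispose of the case $KM\cap L\neq K$, reduce the index set to $N\supseteq KM$, identify $\bigoplus_{N\supseteq KM,\,N\cap L=K}G^{N^\bullet}$ with the complement $V$ in $\CC Ge_{KM}=\sum_{O\in\cC_L^\perp(K,M)}\CC Ge_O\oplus V$, and then invoke Lemma~\ref{DecompositionLemma}(b) via the general-position hypothesis. The step you flag as delicate---that $N\cap L=K$ is equivalent to $N$ containing no member of $\cC_L^\perp(K,M)$---is exactly what the paper proves, by taking a minimal $O\supseteq KM$ with $O\cap L\neq K$ and using modularity (specifically $O=(O\cap L)KM$) to force $O\in\cC(KM)$.
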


\begin{proof} Note that $KM$ is the minimal normal subgroup $N$ containing $M$ that satisfies $N\cap L\supseteq K$.  If $KM\cap L\neq K$, then the sum is therefore empty and we get 0.  If $KM\cap L= K$ and $\cC_L^\perp(K,M)=\emptyset$, then by (\ref{SubgroupCharacterFormula}), we get $|G/KM|$.  

Next, suppose $KM\cap L= K$ and $\cC_L^\perp(K,M)\neq \emptyset$.   Suppose $O\supseteq KM$ is minimal such that $O\cap L\neq K$.  We want to show that $O\in \cC_L^\perp(K,M)$.  We have
$$\begin{tikzpicture}
\foreach \x/\y/\z in {0/2/L,1/1/{O\cap L},2/0/K,2/2/{(O\cap L)KM}, 3/1/{KM},2/3/O}
	\node (\x\y) at (\x,\y) {$\z$};
\draw (02) -- (11) -- (20) -- (31) -- (22) -- (23);
\draw (11) -- (22);
\end{tikzpicture}
$$
 Since $(O\cap L)KM\supseteq KM$ and $L\cap \Big((O\cap L)KM\Big)=O\cap L$, the minimality of $O$ implies $O=(O\cap L)KM$.  However, by modularity, if there exists $KM\subset N\subset O$ then $N\cap L\neq K$.  Thus, the minimality of $O$ implies $O$ covers $KM$.

We have 
\begin{equation*}
V=\bigoplus_{N\supseteq M\atop N\cap L=K}  G^{N^{\bullet}}  =\bigoplus_{N\supseteq KM\atop N\cap L=K}  G^{N^{\bullet}}
\end{equation*}
We therefore apply Lemma \ref{DecompositionLemma}(a) to
$$\CC G e_{KM}=\sum_{O\in \cC_L^\perp(K,M)} \CC G e_{O} \oplus V.$$

We have that  $\overline{\cC_L^\perp(K,M)-O}\cap O\in \{O,KM\}$ since $O$ is a cover of $KM$.  By general position $\overline{\cC_L^\perp(K,M)-O}\neq \overline{\cC_L^\perp(K,M)}$, so $O\nsubseteq \overline{\cC_L^\perp(K,M)-O}$.  Thus, we may apply Lemma \ref{DecompositionLemma}(b) to get the desired dimension formula.
\end{proof}

A special case of Theorem \ref{DegreeSumTheorem} gives us the degrees of the supercharacters of a normal lattice supercharacter theory.

\begin{corollary}  \label{SuperDegreeFormula} Let $\ker(\tS)\subseteq \ker(G)$ be a sublattice.  For $M\in \ker(\tS)$ with $\cC(M)$ in general position over $M$,
$$\chi^{M^\bullet}(1)=|G/\overline{\cC(M)}| \prod_{N\in \cC(M)} (|N/M|-1).$$
\end{corollary}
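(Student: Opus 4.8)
The plan is to derive Corollary \ref{SuperDegreeFormula} as the special case of Theorem \ref{DegreeSumTheorem} obtained by choosing the parameters $K$, $L$, $M$ so that the set $\cC_L^\perp(K,M)$ collapses to the full cover set $\cC(M)$. First I would set $K=M$ and $L=G$. With these choices I compute $KM = M\vee M = M$, so the sum $\sum_{N\supseteq M,\, N\cap L=K}\chi^{N^\bullet}(1)$ becomes $\sum_{N\supseteq M,\, N\cap G=M}\chi^{N^\bullet}(1)$. Here the condition $N\cap G=M$ is simply $N=M$ (since $N\cap G=N$ for $N\subseteq G$), so the sum degenerates to the single term $\chi^{M^\bullet}(1)$. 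That reduction is the conceptual core: picking $L=G$ makes the defining constraint $N\cap L=K$ pin $N$ down to $M$ alone.

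Next I would check that the auxiliary set $\cC_L^\perp(K,M)$ specializes correctly. With $K=M$, $L=G$, and $KM=M$, we have
\begin{equation*}
\cC_G^\perp(M,M)=\{O\in \cC(M)\mid O\cap G\neq M\}=\{O\in \cC(M)\mid O\neq M\}=\cC(M),
\end{equation*}
using again that $O\cap G=O$ and that every cover $O$ of $M$ is strictly larger than $M$. Thus the perpendicular cover set is exactly $\cC(M)$, and the hypothesis that $\cC(M)$ is in general position over $M$ is precisely the hypothesis required to invoke Theorem \ref{DegreeSumTheorem}. I would also verify that the side condition $KM\cap L=K$ reads $M\cap G=M$, which holds automatically, so we land in the middle (nonzero) case of the theorem's piecewise formula whenever $\cC(M)\neq\emptyset$.

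With these specializations in hand, the middle branch of Theorem \ref{DegreeSumTheorem} immediately yields
\begin{equation*}
\chi^{M^\bullet}(1)=|G/\overline{\cC(M)}| \prod_{O\in \cC(M)}(|O/M|-1),
\end{equation*}
which is the claimed formula after relabeling the product index from $O$ to $N$ and noting $KM=M$. I would briefly remark on the edge case $\cC(M)=\emptyset$, i.e.\ $M=G$: then the first branch of the theorem gives $|G/M|=|G/G|=1$, consistent with $\chi^{G^\bullet}$ being the trivial supercharacter of degree one, and an empty product interpretation makes the stated formula agree there as well.

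I do not anticipate a genuine obstacle here, since the corollary is a direct substitution into an already-proved theorem; the only care needed is bookkeeping, namely confirming that the two hypotheses of Theorem \ref{DegreeSumTheorem} (the general-position assumption on $\cC_L^\perp(K,M)$ and the branch condition $KM\cap L=K$) both reduce to statements that are either assumed in the corollary or automatic from the choices $K=M$, $L=G$. The mildest subtlety is simply making explicit that $N\cap G = N$ forces the indexing sum to collapse to the single subgroup $M$, so that the left-hand side really is the isolated degree $\chi^{M^\bullet}(1)$ rather than a multi-term sum.
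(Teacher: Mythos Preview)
Your proposal is correct and matches the paper's own proof exactly: the paper simply states that this is the case $L=G$ and $K=M$ in Theorem \ref{DegreeSumTheorem}. Your additional bookkeeping (collapsing the sum to a single term, identifying $\cC_G^\perp(M,M)=\cC(M)$, and handling the $M=G$ edge case) is all accurate and fills in details the paper leaves implicit.
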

\begin{proof}
This is the case where $L=G$ and $K=M$ in Theorem \ref{DegreeSumTheorem}.
\end{proof}

Since this situation satisfies the hypotheses of Lemma \ref{DecompositionLemma} (b), we have an explicit basis and can use this basis to compute the trace of the module.

\begin{corollary}\label{CharacterFormula}
 Let $\ker(\tS)\subseteq \ker(G)$ be a sublattice.   For $M\in \ker(\tS)$ with $\cC(M)$ in general position over $M$ and $g\in G$,
$$\chi^{M^\bullet}(g)=\left\{
\begin{array}{@{}ll@{}}
\dd \chi^{M^\bullet}(1)\prod_{N\in \cB} \frac{1}{ (1-|N/M|)}  & \text{if $g\in \overline{\cB}$, for $\cB=\min\{B\subseteq \cC(M)\mid g\in \overline{B}\}$,}\\
 0 & \text{if $g\notin \overline{\cC(M)}$}.
\end{array}\right.$$
\end{corollary}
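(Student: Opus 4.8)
The plan is to realize $\chi^{M^\bullet}(g)$ as the trace of left multiplication by $g$ on the module $G^{M^\bullet}$ and to evaluate this trace on the explicit basis supplied by Lemma \ref{DecompositionLemma}(a). Since $\cC(M)$ is in general position over $M$, Remark~(2) gives $\overline{\cC(M)-\{O\}}\cap O=M$ for every $O\in\cC(M)$, so the lemma applies with $\cA=\cC(M)$ and $G^{M^\bullet}$ has basis $\{t\prod_{N\in\cC(M)}(1-b_N)e_M\}$ with $t\in\widehat{G/W}$ and $b_N\in\widehat{N/M}-\{1\}$, where $W=\overline{\cC(M)}$. Note $W\triangleleft G$, being a product of normal subgroups.

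First I would dispose of the vanishing case. Each basis vector $t\prod_N(1-b_N)e_M$ is a combination of cosets lying inside the single coset $tW$, because every $b_N\in W$. Left multiplication by $g$ carries it into the coset $gtW$, so its expansion in the basis involves only vectors attached to the transversal representative of $gtW$. Hence a basis vector can contribute to the diagonal only when $gtW=tW$, i.e. $t^{-1}gt\in W$; as $W$ is normal this is simply $g\in W$. Therefore $\chi^{M^\bullet}(g)=\mathrm{tr}(g,G^{M^\bullet})=0$ whenever $g\notin\overline{\cC(M)}$, which is the second case.

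For $g\in W$, normality gives $gtW=tW$ for all $t$, so left multiplication preserves each block $tV_0$, where $V_0=\CC\spanning\{\prod_N(1-b_N)e_M\}$, and acts on $tV_0$ as left multiplication by $t^{-1}gt$ under the identification $v\mapsto tv$. Thus $\chi^{M^\bullet}(g)=\sum_{t\in\widehat{G/W}}\mathrm{tr}(t^{-1}gt,V_0)$. The key structural step is that general position makes $W/M$ the internal direct product $\prod_{N\in\cC(M)}N/M$ (each factor is normal, they generate $W/M$, and each meets the product of the others in $M$), whence $\CC We_M\cong\bigotimes_{N}\CC[N/M]$ as $W$-modules and $V_0\cong\bigotimes_N V_0^{(N)}$, where $V_0^{(N)}$ is the augmentation submodule of $\CC[N/M]$, i.e. the canonical $N/M$-stable complement of the trivial line $\CC e_N$. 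The trace then factors: writing $\bar y_N$ for the image of $y$ in $N/M$, one gets $\mathrm{tr}(y,V_0)=\prod_N\mathrm{tr}(\bar y_N,V_0^{(N)})$, with each factor equal to $|N/M|-1$ if $\bar y_N=1$ and to $-1$ otherwise.

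Finally I would note that the condition $\bar y_N=1$ means $y\in\overline{\cC(M)-\{N\}}$, a normal subgroup, so it is insensitive to replacing $g$ by any conjugate $t^{-1}gt$; hence every summand equals the common value and the sum contributes the factor $|G/W|$. Setting $\cB=\{N\in\cC(M)\mid g\notin\overline{\cC(M)-\{N\}}\}$, the factorization reads $\chi^{M^\bullet}(g)=|G/W|\prod_{N\notin\cB}(|N/M|-1)\prod_{N\in\cB}(-1)$, and dividing by the degree formula of Corollary \ref{SuperDegreeFormula} collapses this to $\chi^{M^\bullet}(1)\prod_{N\in\cB}\frac{1}{1-|N/M|}$. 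It remains to identify $\cB$ with $\min\{B\subseteq\cC(M)\mid g\in\overline{B}\}$: in the direct product $W/M=\prod N/M$ the set $\{N\mid \bar g_N\neq1\}$ is the support of $g$, which is simultaneously the unique minimal $B$ with $g\in\overline{B}$ and the set of $N$ with $g\notin\overline{\cC(M)-\{N\}}$. The main obstacle is the tensor/direct-product identification together with the observation that the ordered product $\prod_N(1-b_N)$ becomes order-independent after multiplying by $e_M$ (distinct factors commute modulo $M$), which is what makes the clean factorization legitimate; the conjugation-invariance of each summand is the other delicate point, and it is precisely the normality of the subgroups $\overline{\cC(M)-\{N\}}$ that secures it.
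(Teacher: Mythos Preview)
Your argument is correct and follows the same overall strategy as the paper: compute $\chi^{M^\bullet}(g)=\mathrm{tr}(g,G^{M^\bullet})$ on the explicit basis provided by Lemma~\ref{DecompositionLemma}(a), dispose of $g\notin\overline{\cC(M)}$ by the coset argument, and then evaluate the trace for $g\in\overline{\cC(M)}$. The difference lies in how that last trace is evaluated. The paper works element by element: it uses $[N,N']\subseteq M$ to commute factors past one another, writes $h^{-1}gh=\prod_N g_N$ with $g_N\in N$, expands $h\prod_N\big((1-g_Na_N)-(1-g_N)\big)e_M$, and reads off the diagonal coefficient directly. You instead recognize that general position makes $\overline{\cC(M)}/M$ the internal direct product of the $N/M$, so that $\CC\,\overline{\cC(M)}\,e_M\cong\bigotimes_N\CC[N/M]$ and the span $V_0$ is the tensor product of augmentation ideals; the trace then factors as $\prod_N\mathrm{tr}(\bar y_N,V_0^{(N)})$, each factor being $|N/M|-1$ or $-1$. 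Your packaging is cleaner and makes the multiplicativity of the character value transparent, while the paper's coefficient chase is more hands-on but avoids appealing to the tensor identification. Both rely on the same two facts you flag as delicate: commutativity of the factors modulo $M$ (which is what underlies both the tensor decomposition and the paper's reordering), and conjugation invariance of the condition $\bar g_N=1$ via normality of $\overline{\cC(M)-\{N\}}$.
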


\begin{proof}
Suppose $g\in G$, then
\begin{align*}\chi^{M^\bullet}(g)&=\mathrm{tr}(g,G^{M^\bullet})\\
&=\hspace{-.5cm}\sum_{h\in \widehat{G/\overline{\cC(M)}}\atop  a_N\in\widehat{N/M}-\{1\},N\in \cC(M)}\hspace{-.5cm} \mathrm{Coeff}\Big(gh\prod_{N\in \cC(M)} (1-a_N)e_M;h\prod_{N\in \cC(M)} (1-a_N)e_M\Big).
\end{align*}
If $g\notin \overline{\cC(M)}$, then $gh \overline{\cC(M)}\neq h \overline{\cC(M)}$, so
$$\mathrm{Coeff}\Big(gh\prod_{N\in \cC(M)} (1-a_N)e_M;h\prod_{N\in \cC(M)} (1-a_N)e_M\Big)=0$$
for all $h,a_N$.  If $g\in M$, then since $M$ is normal,
\begin{align*}
gh\prod_{N\in \cC(M)} (1-a_N)e_M
&=ge_Mh\prod_{N\in \cC(M)} (1-a_N)\\
&=e_Mh\prod_{N\in \cC(M)} (1-a_N)\\
&=h\prod_{N\in \cC(M)} (1-a_N)e_M.
\end{align*}
Suppose $g\in \overline{\cB}$ with $\cB$ nonempty and minimal.  If $N,N'\in \cB$ are not equal, then since they are covers
$$[N,N']\subseteq N\cap N' = M,$$
so for $a_N\in N$, $a_{N'}\in N'$, we have $a_Na_{N'}a_N^{-1}\in a_{N'}M$, and
$$a_N(1-a_{N'})e_M=a_N(1-a_{N'})a_N^{-1}a_Ne_M=(1-a_{N'})e_Ma_N.$$
Write $h^{-1}gh=\prod_{N\in \cC(M)}g_N$ with $g_N\in N$ and $g_N=1$ if $N\notin\cB$. Then 
\begin{align*}
gh\prod_{N\in \cC(M)} (1-a_N)e_M
&= h(\prod_{N\in \cC(M)} (g_N-g_Na_N)e_M\\
&=h \prod_{N\in \cC(M)} \Big((1-g_Na_N)-(1-g_N)\Big) e_M.
\end{align*}
Thus, we get a coefficient if for each $N\in \cC(M)$ either $g_N=1$ or $g_N=a_N$.  In fact, in this case
\begin{align*}
\sum_{h\in \widehat{G/\overline{\cC(M)}}\atop  a_N\in\widehat{N/M}-\{1\},N\in \cC(M)}\hspace{-.5cm} \mathrm{Coeff}\Big(gh\prod_{N\in \cC(M)} (1-a_N)e_M;h\prod_{N\in \cC(M)} (1-g_N)e_M\Big)
&=\sum_{h\in \widehat{G/\overline{\cC(M)}}\atop g_N\in \widehat{N/M}-\{1\}, N\notin \cB}  (-1)^{|\cB|}\\
&=\frac{\chi^{M^\bullet}(1)}{\prod_{N\in \cB} (|N/M|-1)}  (-1)^{|\cB|},
\end{align*}
as desired. 
\end{proof}

\begin{examples} \hfill

\vspace{.25cm}

\noindent\textbf{Cyclics.}  Consider $\ker(C_n)$ and fix  $b\mid n$. Let
\begin{equation}\label{CyclicCovers}
\cC(C_b)=\{C_{pb}\mid p\in P\}\quad \text{where}\quad P_b=\{p\text{ prime} \mid pb\mid n\},
\end{equation}
and for $O\subseteq P_b$, let $\overline{O}=\prod_{p\in O}p$.
By Corollary \ref{CharacterFormula},
$$\chi^b(x^{n/a})=\left\{\begin{array}{ll}
\dd \frac{n}{b}\prod_{p\in O} \Big(\frac{-1}{p}\Big)\prod_{p\in P_b- O} \Big(1-\frac{1}{p}\Big) & \text{if $O\subseteq P_b$ minimal exists so $a\mid b\overline{O}$, }\\
0 & \text{otherwise.}
\end{array}\right.$$

\vspace{.5cm}

\noindent\textbf{Vector spaces.}  For $\subsp(V)$, since if $\dim(U)=\dim(V)-1$, $\cC(U)=\{V\}$,
$$\chi^{U^\bullet}(v)=\left\{\begin{array}{ll}q-1 & \text{if $v\in U$,}\\ -1 & \text{if $v\notin U$}. \end{array}\right.$$
For $\subsp_\cB(V)$, given $A\subseteq \cB$, for each $b\in \cB-A$, $b\in W$ for some $W\in \cC(\FF_q\spanning\{a\in A\})$, so
$$\overline{\cC(\FF_q\spanning\{a\in A\})}=V.$$
Thus, for $D\subseteq \cB$
$$\chi^{A}\Big(\sum_{d\in D}d\Big)=(q-1)^{|\cB-(A\cup D)|} (-1)^{|(\cB-A)\cap D|}. 
$$
\end{examples}

In \cite[Theorem 3.4]{AL17}, Aliniaeifard gives a lattice theoretic formula for the character values given by
$$\chi^{N^\bullet}(g)=\sum_{O\subseteq N\atop g\in O} \mu(N,O)\frac{|G|}{|O|}.$$
In the examples of $\ker(C_n)$ and $\subsp_\cB(V)$ above, the M\"obius functions for the divisor lattice and subset lattice are well-known, so we get some identities for free. 
For a set of distinct primes $P_b$ and $a\mid \overline{P_b}$,
$$\sum_{A\subseteq P_b\atop a\mid b\overline{A}}\frac{(-1)^{|A|}}{\overline{A}}= \prod_{p\in O}\Big(\frac{-1}{p}\Big)\prod_{p\in P_b- O} \Big(1-\frac{1}{p}\Big), \quad \text{where}\quad \text{$O\subseteq P_b$ is minimal with $a\mid b\overline{O}$.}$$
where we cancel the $n/b$ that appears on both sides.  In the second example, for $D\subseteq \cB$
$$\sum_{A\subseteq C\subseteq \cB\atop D\subseteq C} (-1)^{|C-A|}q^{|\cB-C|}= (q-1)^{|\cB-(A\cup D)|} (-1)^{|(\cB-A)\cap D|}.$$

\subsection{Tensor products} \label{TensorProducts}

For any supercharacter theory $\tS$ of $G$, the space $\f(G;\Cl(\tS))$
is closed under point-wise multiplication.  However, as with irreducible characters it is often not obvious how to decompose the point-wise product of two supercharacters into supercharacters (meanwhile the superclass identifier functions are orthogonal idempotents).  However, this problem seems far more tractable for normal lattice supercharacter theories.

For $M\in \ker(\tS)$ let $\cC(M)$ be its set of covers.  The following lemma is framed in the language of groups, but applies to arbitrary modular lattices.

\begin{lemma} \label{CoverMeetLemma}
For $M,N\in \ker(\tS)$, 
$$\overline{\cC(M)}\cap \overline{\cC(N)}= \overline{\cC(M\cap N)}.$$
\end{lemma}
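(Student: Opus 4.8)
The plan is to prove the stated equality as two separate inclusions, working throughout in the modular lattice $\ker(\tS)$ and writing $P=M\cap N$. Since every cover of an element contains that element, $\overline{\cC(X)}\supseteq X$ whenever $\cC(X)\neq\emptyset$, and I may assume $M,N\neq G$ so that $\cC(M)$ and $\cC(N)$ are both nonempty (if, say, $M=G$ then $P=N$ and the identity is handled directly). Replacing $\ker(\tS)$ by the interval $[P,G]$ alters neither the covers of any element above $P$ nor the joins $\overline{\cC(\cdot)}$, so I may also assume $P$ is the bottom element; then $\overline{\cC(P)}$ is exactly the join of the atoms (minimal elements strictly above $P$) of $[P,G]$.

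For the inclusion $\overline{\cC(M\cap N)}\subseteq \overline{\cC(M)}\cap\overline{\cC(N)}$ it suffices to show that each cover $O$ of $P$ lies in $\overline{\cC(M)}$ (the case of $N$ is symmetric), since $\overline{\cC(P)}$ is the join of these covers. I look at $O\cap M\in[P,O]$: because $O$ covers $P$, either $O\subseteq M$, whence $O\subseteq M\subseteq\overline{\cC(M)}$, or $O\cap M=P$. In the latter case modularity gives the transposition isomorphism $[P,O]=[O\cap M,O]\cong[M,OM]$; as the left-hand interval has only two elements, so does $[M,OM]$, so $OM$ covers $M$ and $O\subseteq OM\subseteq\overline{\cC(M)}$. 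This settles the forward inclusion.

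The reverse inclusion $\overline{\cC(M)}\cap\overline{\cC(N)}\subseteq\overline{\cC(P)}$ is the crux. Write $z=\overline{\cC(M)}\cap\overline{\cC(N)}$ and $s=\overline{\cC(P)}$; the forward inclusion already gives $s\subseteq z$, so in the graded lattice $[P,G]$ it is equivalent to show that $z$ is a join of covers of $P$ (equivalently that $\rank(z)=\rank(s)$). The natural first step is a dual transposition: for any cover $O$ of $M$ one has $M\cap(O\cap N)=P$, so modularity yields $[P,O\cap N]\cong[M,O\cap(M\vee N)]$, and since $O$ covers $M$ this interval is trivial or an atom; hence each $O\cap N$ is either $P$ or a cover of $P$, so $O\cap N\subseteq s$. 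I expect the main obstacle here: one cannot simply conclude $z=\bigvee_{O\in\cC(M)}(O\cap\overline{\cC(N)})\subseteq s$, since meet does not distribute over join in a non-distributive modular lattice. To circumvent this I plan to use that the socle interval $[M,\overline{\cC(M)}]$ is a complemented, hence atomistic, modular lattice: choosing an independent frame of covers of $M$ spanning $\overline{\cC(M)}$, I aim to show that the two constraints $z\subseteq\overline{\cC(M)}$ and $z\subseteq\overline{\cC(N)}$ together force $[P,z]$ to be complemented as well, so that its top $z$ equals the join of its atoms, which are covers of $P$; this yields $z\subseteq s$. A workable alternative is induction on $\rank(G)-\rank(P)$, peeling off one cover of $M$ by the modular law and reducing to a shorter lattice. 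In either approach the delicate point is converting the per-cover estimates $O\cap N\subseteq s$ into the global bound $z\subseteq s$ without distributivity.
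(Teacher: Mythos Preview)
Your forward inclusion $\overline{\cC(M\cap N)}\subseteq\overline{\cC(M)}\cap\overline{\cC(N)}$ is correct and is exactly the paper's argument.

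The reverse inclusion, however, is not proved in your proposal; you only announce two possible strategies. In strategy (a) you assert that the constraints $z\subseteq\overline{\cC(M)}$ and $z\subseteq\overline{\cC(N)}$ ``force $[P,z]$ to be complemented,'' but you give no mechanism for this. What one gets cheaply from modularity is that $[z\cap M,\,z]\cong[M,\,zM]$ is complemented (being a subinterval of the atomistic interval $[M,\overline{\cC(M)}]$), and likewise $[z\cap N,\,z]$ is complemented; but $P=M\cap N$ may sit strictly below both $z\cap M$ and $z\cap N$, and there is no general principle that glues two complemented upper intervals into a complemented interval down to their meet. Strategy (b), ``peel off one cover of $M$,'' is not specified enough to evaluate: the difficulty you yourself flag---that meet does not distribute over the join $\overline{\cC(M)}=\bigvee_{O\in\cC(M)}O$---is precisely what blocks a naive one-cover-at-a-time reduction.

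The paper handles the reverse inclusion by a genuine double induction, not on the height of the lattice, but on the sizes of subsets $\cA\subseteq\cC(M)$ and $\cB\subseteq\cC(N)$, proving $\overline{\cA}\cap\overline{\cB}\subseteq\overline{\cC(M\cap N)}$ for all such pairs. The base case $|\cB|=1$ already requires a case split on whether $(O\cap\overline{\cA})N$ equals $O$ or $N$, and the inductive step involves further case analysis: one reduces either by dropping a redundant element of $\cA$ or $\cB$, or by exhibiting $\overline{\cA}\cap\overline{\cB}$ as a product $\overline{\cA-\{P\}}\cdot\overline{\cB-\{O\}}$ of two smaller joins, or finally by showing $\overline{\cA}\cap\overline{\cB}$ itself covers $M\cap N$. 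None of this structure is visible in your outline, and your per-cover estimate $O\cap N\subseteq s$ (for $O\in\cC(M)$) is the $|\cA|=|\cB|=1$ case only. To complete the argument you need to supply an inductive scheme of this kind, with the modular-law manipulations that make the reduction go through.
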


\begin{proof}
First, suppose $O\in\cC(M\cap N)$.  Then either $O\subseteq M$ or $OM$ covers $M$, so in either case $O\subseteq \overline{\cC(M)}$.  Similarly $O\subseteq \overline{\cC(N)}$.  Thus, $\overline{\cC(M\cap N)}\subseteq \overline{\cC(M)}\cap \overline{\cC(N)}$.

Next we want to show that $\overline{ \cA}\cap\overline{ \cB}\subseteq \overline{\cC(M\cap N)} $ for all subsets $\cA\subseteq \cC(M)$ and $\cB\subseteq \cC(N)$.  We will use double induction on $|\cA|$ and $|\cB|$.  If $|\cB|=1$, then let $\{O\}=\cB$ so $\overline{\cB}=O$.   Since $O$ covers $N$ we have that either $(O\cap  \overline{\cA})N=O$ or $(O\cap  \overline{\cA})N=N$.

If $(O\cap  \overline{\cA})N=O$ and $M=M\cap N$, then   
$$(O\cap  \overline{\cA})\subseteq   \overline{\cA}\subseteq \overline{\cC(M)}=\overline{\cC(M\cap N)}.$$  
Otherwise, $M\cap O$ covers $M\cap N$.  In this case, $N\cap  \overline{\cA}$ is generated by covers of $M\cap N$ contained in $N$ and since $O\cap  \overline{\cA}$ covers $N \cap \overline{\cA}$, we have 
$$O\cap  \overline{\cA}=(N\cap \overline{\cA})(O\cap M)\subseteq \overline{\cC(M\cap N)}.$$

If $(O\cap  \overline{\cA})N=N$, then $\overline{\cA}\cap \overline{\cB}\subseteq N$ and by modularity, the covers of $M\cap N$ in $N$ generate $\overline{\cA}\cap \overline{\cB}$.  By symmetry, the result also holds if $|\cA|=1$ and $\cB$ is arbitrary.

Suppose $|\cA|,|\cB|>1$.  If there exists $O\in \cB$ such that 
$$\overline{\cB}=\overline{\cB-\{O\}},$$
then by induction $\overline{\cA}\cap \overline{\cB} = \overline{\cA}\cap \overline{\cB-\{O\}} \subseteq \overline{\cC(M\cap N)}$.  Thus, WLOG we may assume that 
$$\overline{\cB}\neq \overline{\cB-\{O\}}\quad\text{and by symmetry}\quad \overline{\cA}\neq \overline{\cA-\{P\}}$$
for all $O\in \cB$, $P\in \cA$.  

As before $(\overline{\cA}\cap \overline{\cB}) \overline{\cB-\{O\}}\in \{ \overline{\cB-\{O\}}, \overline{\cB}\}$.  If  $(\overline{\cA}\cap \overline{\cB}) \overline{\cB-\{O\}}= \overline{\cB-\{O\}}$, then we are done by induction.  By symmetry, if 
$(\overline{\cA}\cap \overline{\cB}) \overline{\cA-\{P\}}= \overline{\cA-\{P\}}$, then we are done by induction. 
We therefore have
$$\begin{tikzpicture}
\foreach \l/\x/\y/\e in {20/0/0/{\overline{\cA-\{P\}}},10/3/-1/{\overline{\cA-\{P\}}\cap \overline{\cB}},00/6/-2/{\overline{\cA-\{P\}}\cap \overline{\cB-\{O\}}},30/3/1/{\overline{\cA}},31/9/1/{\overline{\cB}}, 21/6/0/{\overline{\cA}\cap \overline{\cB}},11/9/-1/{\overline{\cA}\cap \overline{\cB-\{O\}}},22/12/0/\overline{\cB-\{O\}}}
	\node (\l) at (\x,\y) {$\e$};
\draw (11) -- (00) -- (10) -- (20) -- (30) -- (21)  -- (11) -- (22) -- (31) -- (21) -- (10); 
\end{tikzpicture}
$$
If there exists $P\in \cA$ and $O\in \cB$ such that $\overline{\cA-\{P\}}\neq \overline{\cB-\{O\}}$, then since $\overline{\cA}\cap \overline{\cB}$ covers these two groups we have
$$\overline{\cA}\cap \overline{\cB}=\overline{\cA-\{P\}}\ \overline{\cB-\{O\}}\subseteq \overline{\cC(M\cap N)},$$
by induction.  Else, for all $O\in \cB$, $Q\in \cA$
$$\overline{\cB-\{O\}}\cap \overline{\cA}=\overline{\cA-\{Q\}}\cap \overline{\cB}.$$
In particular,
$$\overline{\cB-\{O\}}\cap \overline{\cA}\subseteq \overline{\cA}\cap \bigcap_{Q\in \cB} \overline{\cB-\{Q\}}=\overline{\cA}\cap N.$$
By symmetry 
$$\overline{\cB-\{O\}}\cap \overline{\cA}=\overline{\cA-\{P\}}\cap \overline{\cB}\subseteq M\cap \overline{\cB}.$$
Thus, 
$$\overline{\cB-\{O\}}\cap\overline{ \cA}\subseteq M\cap \overline{\cB}\cap N\cap \overline{\cA}=M\cap N.$$
In this case, since $\overline{\cA}\cap \overline{\cB}$ covers $\overline{\cB-\{O\}}\cap \overline{\cA}$ it must also cover $M\cap N$ and we are done.
\end{proof}

\begin{corollary} \label{TensorProduct}  Suppose $M,N\in \ker(\tS)$ with $\cC(M)$ and $\cC(N)$ in general position over $M$ and $N$, respectively.
 Suppose further that for each $O\in \cC(M\cap N)$ either $O\subseteq M$ or $O\subseteq N$.  Then
$$\frac{\chi^{M^\bullet}\odot \chi^{N^\bullet}}{\chi^{M^\bullet}(1)\chi^{N^\bullet}(1)}=\frac{\chi^{(M\cap N)^\bullet}}{\chi^{(M\cap N)^\bullet}(1)}$$
\end{corollary}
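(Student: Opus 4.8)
The plan is to prove the stated identity of class functions by evaluating both sides at an arbitrary $g\in G$ and comparing values via the explicit formula of Corollary \ref{CharacterFormula}. Write $K=M\cap N$. By hypothesis that formula applies to $\chi^{M^\bullet}$ and $\chi^{N^\bullet}$; I would also verify that $\cC(K)$ is in general position over $K$, so that it applies to $\chi^{(M\cap N)^\bullet}$ as well, deducing this from the general position of $\cC(M)$ and $\cC(N)$ together with the cover correspondence set up below. The first, easy, step is to match supports: by Corollary \ref{CharacterFormula} each of $\chi^{M^\bullet}$, $\chi^{N^\bullet}$, $\chi^{(M\cap N)^\bullet}$ vanishes exactly off $\overline{\cC(M)}$, $\overline{\cC(N)}$, $\overline{\cC(M\cap N)}$ respectively, and Lemma \ref{CoverMeetLemma} gives $\overline{\cC(M)}\cap\overline{\cC(N)}=\overline{\cC(M\cap N)}$. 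Hence the left-hand product and the right-hand side vanish on exactly the same set, and it remains to compare values at $g\in\overline{\cC(M\cap N)}$.

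The core of the argument is a correspondence between the covers of $K$ and those of $M$ and $N$. Using the hypothesis that every $O\in\cC(K)$ satisfies $O\subseteq M$ or $O\subseteq N$, I partition $\cC(K)=\cC_M\sqcup\cC_N$, where $\cC_M=\{O\in\cC(K)\mid O\subseteq M\}$ and $\cC_N=\{O\in\cC(K)\mid O\subseteq N\}$ (the union is disjoint, since $O\subseteq M\cap N=K$ is impossible for a cover of $K$). For $O\in\cC_N$ one has $O\cap M=O\cap N\cap M=K$, so by the modular (diamond) isomorphism $OM$ covers $M$, giving a map $\cC_N\to\cC(M)$, $O\mapsto OM$; the second isomorphism theorem yields $|OM/M|=|O/K|$. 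I would show this map is injective (with inverse $P\mapsto P\cap N$) and, using the hypothesis once more, surjective onto $\cC(M)$; symmetrically $O\mapsto ON$ identifies $\cC_M$ with $\cC(N)$ and satisfies $|ON/N|=|O/K|$. This is the step I expect to be the main obstacle: ruling out a cover $P$ of $M$ with $P\cap N=K$ and $P\not\subseteq MN$ (which would destroy surjectivity) requires extracting from such a $P$ a cover of $K$ lying in neither $M$ nor $N$, contradicting the hypothesis, and this extraction is the only place the modular structure is genuinely used.

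With the correspondence in hand, fix $g\in\overline{\cC(M\cap N)}$ and let $R=\min\{B\subseteq\cC(K)\mid g\in\overline{B}\}$ be the minimal covering set from Corollary \ref{CharacterFormula}, with splitting $R=R_M\sqcup R_N$. Writing $\cB_M(g)=\min\{B\subseteq\cC(M)\mid g\in\overline{B}\}$, I claim $\cB_M(g)=\{OM\mid O\in R_N\}$: the inclusion $\subseteq$ holds because $g\in\overline{R}=\overline{R_M}\,\overline{R_N}\subseteq M\,\overline{R_N}=\overline{\{OM\mid O\in R_N\}}$, so minimality forces $\cB_M(g)\subseteq\{OM\mid O\in R_N\}$, while the reverse inclusion follows from the minimality of $R$ inside $\cC(K)$, since a redundant $OM$ would, by modularity, make the corresponding $O\in R_N$ redundant in $R$. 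Symmetrically $\cB_N(g)=\{ON\mid O\in R_M\}$.

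Finally I would assemble the formula. Substituting into Corollary \ref{CharacterFormula} and using $|OM/M|=|ON/N|=|O/K|$,
$$\frac{\chi^{M^\bullet}(g)}{\chi^{M^\bullet}(1)}\cdot\frac{\chi^{N^\bullet}(g)}{\chi^{N^\bullet}(1)}=\prod_{O\in R_N}\frac{1}{1-|O/K|}\prod_{O\in R_M}\frac{1}{1-|O/K|}=\prod_{O\in R}\frac{1}{1-|O/K|}=\frac{\chi^{(M\cap N)^\bullet}(g)}{\chi^{(M\cap N)^\bullet}(1)},$$
which is the desired equality; combined with the matching of supports from the first step, this proves the corollary. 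The routine lattice bookkeeping (the isomorphism-theorem index identities and the diamond isomorphism for coverings) is standard, and the genuinely delicate points are the surjectivity of the cover correspondence and the minimality matching of the third paragraph.
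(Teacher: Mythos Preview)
Your overall strategy is the same as the paper's: match supports via Lemma~\ref{CoverMeetLemma}, then compare values on $\overline{\cC(M\cap N)}$ using Corollary~\ref{CharacterFormula} and a correspondence between covers of $K=M\cap N$ and covers of $M$, $N$. The final computation and the general-position check for $\cC(K)$ are also handled the same way.

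There is, however, a genuine gap in your plan: the \emph{global} bijection $\cC_N\to\cC(M)$, $O\mapsto OM$, need not be surjective. Take $\ker(\tS)$ to be a chain $0\subsetneq 1\subsetneq 2\subsetneq 3$ with $N=0$ and $M=2$; then $K=0$, $\cC(K)=\{1\}$, $\cC_N=\emptyset$, $\cC_M=\{1\}$, but $\cC(M)=\{3\}$. All hypotheses of the corollary are satisfied (the unique cover $1$ of $K$ lies in $M$), yet there is a cover $P=3$ of $M$ with $P\cap N=K$, so your proposed surjectivity argument cannot succeed. The ``extraction'' you anticipate---producing from such a $P$ a cover of $K$ in neither $M$ nor $N$---fails here because the only cover of $K$ below $P$ already lies in $M$.

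Fortunately this global bijection is not what the argument needs. The paper works entirely with the \emph{local} sets attached to a fixed $g$: it takes the minimal $\cA\subseteq\cC(M)$ and $\cB\subseteq\cC(N)$ with $g\in\overline{\cA}_\circ\cap\overline{\cB}_\circ$, sets $\cI\subseteq\cC(K)$ with $\overline{\cI}=\overline{\cA}\cap\overline{\cB}$, and shows that $O\mapsto OM$ and $O\mapsto ON$ give bijections $\cI_N\leftrightarrow\cA$ and $\cI_M\leftrightarrow\cB$. Your own minimality argument in the third paragraph is essentially this local statement and does not rely on global surjectivity; if you drop the global claim and argue directly that $\cB_M(g)=\{OM\mid O\in R_N\}$ (and symmetrically), your proof goes through and coincides with the paper's.
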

\begin{proof} We first show that $\cC(M\cap N)$ is in general position over $M\cap N$.  Let $O\in \cC(M\cap N)$.  WLOG $ON\in \cC(N)$.  We have that 
$$\overline{\cC(M\cap N)-\{O\}}N=\overline{\{PN\mid P\in \cC(M\cap N), P\neq O\}}\neq \overline{\{PN\mid P\in \cC(M\cap N)\}}=\overline{\cC(M\cap N)}N,$$
since $\{PN\mid P\in \cC(M\cap N), PN\in \cC(N)\}\subseteq \{PN\mid P\in \cC(M\cap N)\}$ must also be in general position over $N$.   However then,
$$\overline{\cC(M\cap N)-\{O\}}\neq \overline{\cC(M\cap N)}.$$

By Corollary \ref{CharacterFormula} and Lemma \ref{CoverMeetLemma},
$$\Big(\chi^{M^\bullet}\odot \chi^{N^\bullet}\Big)(g)=0\quad \text{if and only if} \quad \chi^{(M\cap N)^\bullet}(g)=0.$$
Suppose $g\in \overline{\cC(M)}\cap \overline{\cC(N)}=\overline{\cC(M\cap N)}$.  Then there exists subsets $\cA\subseteq\cC(M)$, $\cB\subseteq \cC(N)$, and $\cI\subseteq \cC(M\cap N)$ such that $g\in \overline{\cA}_\circ\cap \overline{\cB}_\circ$ and $\overline{\cA}\cap \overline{\cB}=\overline{\cI}$. By assumption,
$$\cI=\cI_M\sqcup \cI_N,\quad \text{where} \quad \cI_K=\{O\in \cI\mid O\subseteq K\}.$$
So if $g\in \overline{\cI}$, then 
$$g\in M\cdot \overline{\cI_N}=\prod_{O\in \cI_N}MO=\overline{\cA}.$$
Thus, $P\in \cA$ implies $P\cap N\neq M\cap N$ and similarly $Q\in \cB$ implies $Q\cap M\neq M\cap N$.  Suppose $P,P'\in \cA$.  Then $P\cap N=P'\cap N$ implies $(P\cap P')\cap N=(P\cap N)\cap (P'\cap N)=P\cap N$.  We conclude $P=P'$, and similarly, for $Q,Q'\in \cB$, $Q\cap M=Q'\cap M$ if and only if $Q=Q'$.  We therefore have bijections
$$\begin{array}{ccc} \cI_M & \longrightarrow & \cB\\ O & \mapsto & ON\\ Q\cap M & \mapsfrom & Q\end{array} \quad \text{and}\quad  \begin{array}{ccc} \cI_N & \longrightarrow & \cA\\ O & \mapsto & OM\\ P\cap N & \mapsfrom & P\end{array}\ .$$

 Then Corollary \ref{CharacterFormula} implies
\begin{align*}
\chi^{M^\bullet}\odot \chi^{N^\bullet} (g)&= \chi^{M^\bullet}(1)\chi^{N^\bullet}(1)\prod_{P\in \cA\atop Q\in \cB}\frac{1}{(1-|P/M|)(1-|Q/N|)}\\
&= \chi^{M^\bullet}(1)\chi^{N^\bullet}(1)\prod_{O\in \cI_N}\frac{1}{(1-|OM/M|)}\prod_{O\in \cI_N}\frac{1}{(1-|ON/N|)}\\
&= \chi^{M^\bullet}(1)\chi^{N^\bullet}(1)\prod_{O\in \cI_N}\frac{1}{(1-|O/(M\cap N)|)}\prod_{O\in \cI_N}\frac{1}{(1-|O/(M\cap N)|)}\\
&=\frac{\chi^{M^\bullet}(1)\chi^{N^\bullet}(1)}{\chi^{(M\cap N)^\bullet}(1)} \chi^{(M\cap N)^\bullet}(g),
\end{align*}   
as desired.
\end{proof}

\begin{examples} 

\vspace{.25cm}

\noindent\textbf{Cyclics.}
Let $C_n$ be cyclic and suppose $a,b\mid n$.  Fix $C_{p\gcd(a,b)}\in \cC(C_{\gcd(a,b)})$.  Then $C_{p\gcd(a,b)}\subseteq C_a$ if $p\mid \frac{a}{\gcd(a,b)}$ and $C_{p\gcd(a,b)}\subseteq C_b$ if $p\mid\frac{b}{\gcd(a,b)}$.  In other words, the hypotheses of the Corollary \ref{TensorProduct} are satisfied if 
$$\Big\{p\text{ prime}\ \Big|\  p\mid \frac{n}{\gcd(a,b)}\Big\}=\Big\{p\text{ prime}\ \Big|\ p\mid \frac{\lcm(a,b)}{\gcd(a,b)}\Big\}.$$

\vspace{.5cm}

\noindent\textbf{Vector spaces.}  For $\subsp(V)$ the hypotheses are generally not satisfied.  For $\subsp_\cB(V)$, we have that the hypotheses are satisfied if and only if $A,B\in \cB$ satisfy $A\cup B= \cB$.
\end{examples}

\subsection{Restriction formula} \label{RestrictionFormula}

In this section we further assume that 
$$\ker(\tS)=J^\vee(\cI)$$
where $\cI$ are the intersection-irreducible subgroups of $\ker(\tS)$.  
  Thus, for each normal subgroup $N\in \ker(\tS)$, there exists a unique antichain $\cA\in \anti(\cI)$ such that in the notation of (\ref{OverUnderline}),
$$N=\underline{\cA}.$$
Since the dual of a distributive lattice is distributive, we also have that if $\cP$ is the set of product irreducible elements, then for each $N\in\ker(\tS)$ there exists $\cB\subseteq \anti(\cP)$ such that 
$$N=\overline{\cB}.$$ 

\begin{lemma} \label{ProductToCover}
Let $\ker(\tS)$ be a lattice for a supercharacter theory of $G$ and suppose $\ker(\tS)$ is distributive.  Fix $\cB\subseteq \anti(\cP)$.
\begin{enumerate}
\item[(a)] For each $L\in \ker(\tS)$ such that $\overline{\cB}\in \cC(L)$, there exists a unique $K_L\in \cB$ such that $K_L\cap L\neq K_L$.
\item[(b)] The function
$$\begin{array}{ccc} \{L\in \ker(\tS)\mid \overline{\cB}\in \cC(L)\} & \longrightarrow & \cB \\
L & \mapsto & K_L\\ M_K\overline{\cB-K}&\mapsfrom & K\end{array}$$
is a bijection, where $M_K$ is the unique element such that $K\in \cC(M_K)$.
\end{enumerate} 
\end{lemma}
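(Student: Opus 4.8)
The plan is to argue purely lattice-theoretically, using only that $\ker(\tS)$ is distributive (hence modular) and that each $K\in\cB\subseteq\cP$ is product-irreducible, so it has a unique lower cover $M_K$ (this is exactly the hypothesis $K\in\cC(M_K)$ with $M_K$ unique). Throughout I will lean on two consequences of this setup. First, since $\cB$ is an antichain of product-irreducibles, $K\nsubseteq\overline{\cB-K}$ for every $K\in\cB$: otherwise product-irreducibility of $K$ would force $K\subseteq K'$ for some other $K'\in\cB$, contradicting the antichain. Second, since $M_K$ is the \emph{unique} element covered by $K$, every $X\subsetneq K$ satisfies $X\subseteq M_K$ (a quick product-irreducibility argument: $X\vee M_K$ lies strictly below $K$, hence equals $M_K$).

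For part (a), existence is immediate: if every $K\in\cB$ satisfied $K\subseteq L$ then $\overline{\cB}\subseteq L$, contradicting $L\subsetneq\overline{\cB}$. For uniqueness, suppose $K,K'\in\cB$ are distinct with $K\cap L\neq K$ and $K'\cap L\neq K'$. Since $L\subsetneq KL\subseteq\overline{\cB}$ and $\overline{\cB}$ covers $L$, I get $KL=\overline{\cB}$, and likewise $K'L=\overline{\cB}$. Then distributivity gives $K=K\cap(K'L)=(K\cap K')\vee(K\cap L)$, and product-irreducibility of $K$ forces one joinand to equal $K$; as $K\cap L\neq K$, I conclude $K\cap K'=K$, i.e.\ $K\subseteq K'$, contradicting the antichain. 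Hence $K_L$ is well defined.

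For part (b) I verify the two maps are mutually inverse. Starting from $K\in\cB$ and setting $L=M_K\overline{\cB-K}$, distributivity yields $K\cap L=M_K\vee(K\cap\overline{\cB-K})$; since $K\nsubseteq\overline{\cB-K}$ the subgroup $K\cap\overline{\cB-K}$ is proper in $K$ and hence contained in $M_K$, so $K\cap L=M_K\neq K$. Thus $K\nsubseteq L$, while $K'\subseteq\overline{\cB-K}\subseteq L$ for all other $K'\in\cB$, which shows $K_L=K$. Moreover $\overline{\cB}=K\overline{\cB-K}$ covers $L=M_K\overline{\cB-K}$, because joining the cover relation $M_K\subsetneq K$ with the fixed element $\overline{\cB-K}$ either preserves or collapses the covering (a standard property of modular lattices), and here it cannot collapse since $K\nsubseteq L$. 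Conversely, given $L$ covered by $\overline{\cB}$ with $K=K_L$, the remaining covers satisfy $K'\subseteq L$, so $\overline{\cB-K}\subseteq L$ and therefore $KL=\overline{\cB}$; the modular interval isomorphism $[L,\overline{\cB}]=[L,KL]\cong[K\cap L,K]$ shows $K$ covers $K\cap L$, whence $K\cap L=M_K$ by uniqueness of the lower cover. Finally distributivity gives $L=L\cap\overline{\cB}=(L\cap K)\vee(L\cap\overline{\cB-K})=M_K\vee\overline{\cB-K}$, matching the claimed inverse.

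The main obstacle is the interplay between the covering relation and the lattice operations. The crux is the identity $K\cap L=M_K$: this is exactly where modularity (through the interval isomorphism $[L,\overline{\cB}]\cong[K\cap L,K]$) must be combined with the uniqueness of the lower cover $M_K$ of the product-irreducible $K$. The other delicate point is stating the cover-preservation-under-join step correctly, namely that covers are either preserved or collapsed but never stretched; once these two facts are in hand, everything else is routine bookkeeping with distributivity and the antichain condition. Conceptually this is just Birkhoff duality—removing the maximal product-irreducible $K$ from the order ideal generated by $\cB$—which is a useful sanity check on the formulas.
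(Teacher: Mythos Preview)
Your proof is correct, and the tools---distributivity, join-irreducibility of the elements of $\cB$, and the modular diamond isomorphism---are the same ones the paper uses, but your organization is different and in one place cleaner. For part (a) you argue directly: if two distinct $K,K'\in\cB$ both fail to be contained in $L$, then $K=K\cap(K'L)=(K\cap K')\vee(K\cap L)$ and join-irreducibility force $K\subseteq K'$, contradicting the antichain. The paper's argument for (a) instead fixes $K$ with $K\nsubseteq L$ and shows that any other $L'$ covered by $\overline{\cB}$ with $K\nsubseteq L'$ must equal $L$; this is really the injectivity of $L\mapsto K_L$ (a piece of (b)), and the uniqueness of $K_L$ for a given $L$ is never addressed head-on. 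For (b) you explicitly verify both inverse compositions, while the paper compresses everything into a single line asserting $K\cap L=M_K$ and $L=M_K\overline{\cB-K}$. Your version is more self-contained, and your preliminary remark that join-irreducibles are join-prime in a distributive lattice (hence $K\nsubseteq\overline{\cB-K}$) makes the role of the antichain hypothesis explicit.
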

\begin{proof}
(a) Fix $L$ such that $\overline{\cB}\in \cC(L)$. Then there exists $K\in \cB$ such that $K\neq K\cap L$.  Then $KL=\overline{\cB}$ and so $K\in \cC(K\cap L)$.  Suppose there exists $L'$ with $\overline{\cB}\in \cC(L')$ such that $K\neq K\cap L'$.  Then since $K$ is product irreducible $K\cap L'=K\cap L$.  But then $L'=\overline{\cB}\cap L'=KL\cap L'=(K\cap L')(L\cap L')$ which forces $L=L'$. 

(b) The uniqueness of $M_K$ implies that $L\cap K=M_K$ and $M_K\prod_{K\neq K'\in \cB}K'=L$.
\end{proof}

This result allows us to prove some convenient features about distributive lattices of normal subgroups.  

\begin{corollary} Let $\ker(\tS)$ be a lattice for a supercharacter theory of $G$ and suppose $\ker(\tS)$ is distributive. For any normal subgroup $N\in\ker(\tS)$,
\begin{enumerate}
\item[(a)] $N_{\circ}\neq \emptyset$,
\item[(b)] $X^{N^\bullet}\neq \emptyset$,
\item[(c)] $\cC(N)$ is in general position over $N$.
\end{enumerate} 
\end{corollary}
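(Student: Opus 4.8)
The plan is to prove the three parts out of order, since they have quite different difficulties: part (c) is immediate from distributivity, part (b) then follows from the degree formula, and part (a)—the only part requiring real work—is handled by an explicit construction using Lemma \ref{ProductToCover}. For (c), first note that $\cC(N)$ is an antichain, as distinct covers of $N$ are incomparable. The key observation is that for distinct $O,O'\in\cC(N)$ one has $O\cap O'=N$: indeed $N\subseteq O\cap O'\subseteq O$ and the fact that $O$ covers $N$ force $O\cap O'\in\{N,O\}$, while $O\cap O'=O$ would give $O\subseteq O'$, contradicting incomparability. Then for a fixed $O\in\cC(N)$, distributivity gives
$$\overline{\cC(N)-\{O\}}\cap O=\Big(\bigvee_{O'\in\cC(N),\,O'\neq O}O'\Big)\wedge O=\bigvee_{O'\neq O}(O'\cap O)=N,$$
which by the equivalent formulation recorded in the Remark after Lemma \ref{DecompositionLemma} is exactly the statement that $\cC(N)$ is in general position over $N$.

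For (b), if $N=G$ then $X^{G^\bullet}=\{\One\}\neq\emptyset$ and there is nothing to prove, so assume $N\neq G$, whence $\cC(N)\neq\emptyset$. Having established (c), Corollary \ref{SuperDegreeFormula} applies and yields
$$\chi^{N^\bullet}(1)=|G/\overline{\cC(N)}|\prod_{O\in\cC(N)}(|O/N|-1).$$
Each factor is positive, since $|G/\overline{\cC(N)}|\geq 1$ and $|O/N|-1\geq 1$ because $O\supsetneq N$. Hence $\chi^{N^\bullet}(1)>0$, so $X^{N^\bullet}$ cannot be empty.

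For (a) I would use the product-irreducible side of the duality. Write $N=\overline{\cB}$ with $\cB\subseteq\anti(\cP)$, the antichain of product-irreducibles. By Lemma \ref{ProductToCover}(b), the lower covers of $N$ are exactly the subgroups $L_K=M_K\overline{\cB-K}$ for $K\in\cB$, where $M_K$ is the unique element covered by $K$, and moreover $K\cap L_K=M_K$. I will exhibit an element of $N_\circ=N-\bigcup_{K\in\cB}L_K$ directly. For each $K\in\cB$ choose $g_K\in K-M_K$ (possible since $M_K\subsetneq K$), and set $g=\prod_{K\in\cB}g_K\in N$. To see $g\notin L_K$ for a fixed $K$, observe $N=KL_K$ (as $M_K\subseteq K$), so the second isomorphism theorem gives $N/L_K\cong K/(K\cap L_K)=K/M_K$. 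Since every factor $g_{K'}$ with $K'\neq K$ lies in $\overline{\cB-K}\subseteq L_K$ and $L_K$ is normal, one gets $gL_K=g_KL_K$; under the isomorphism this corresponds to $g_KM_K\neq 1$. Thus $g\notin L_K$ for every $K\in\cB$, so $g\in N_\circ$.

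The main obstacle is part (a): both (b) and (c) reduce quickly to earlier results, but (a) requires correctly identifying the lower covers through Lemma \ref{ProductToCover} and then verifying that the single product element $g$ escapes all of them simultaneously. The crucial structural fact that makes this work is that product-irreducibility pins down $K\cap L_K=M_K$, so that the chief factor $N/L_K$ is detected precisely by the coordinate $g_K$; without distributivity this can genuinely fail, as the Klein four group (whose subgroup lattice is non-distributive and is the union of its three lower covers) shows.
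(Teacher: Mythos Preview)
Your proofs of (a) and (c) are correct and essentially identical to the paper's: for (c) the paper also reduces to $\overline{\cC(N)-\{O\}}\cap O=N$ via distributivity, and for (a) the paper likewise uses Lemma~\ref{ProductToCover} to enumerate the lower covers $L$ of $N$, picks one element from each $K_L-(K_L\cap L)=K_L-M_{K_L}$, and checks that the product lies in $N_\circ$; your write-up simply fills in the details of the verification more explicitly.

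Your argument for (b), however, takes a genuinely different route. The paper deduces (b) from (a) by a counting argument: once (a) shows every $N_\circ$ is nonempty, one has $|\ker(\tS)|=|\Cl(\tS)|$, and then the supercharacter-theory axiom $|\Cl(\tS)|=|\Ch(\tS)|$ forces every $X^{N^\bullet}$ to be nonempty as well. You instead deduce (b) from (c), invoking Corollary~\ref{SuperDegreeFormula} to get $\chi^{N^\bullet}(1)>0$ directly. Both are valid; the paper's route is softer (it uses only the axioms and part (a)), while yours is more self-contained in that it avoids the logical dependence on (a) and ties (b) to the explicit dimension formula already established.
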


\begin{proof}
(a) For each $L$ such that $N\in \cC(L)$ we can select an element $g_L\in K_L-(K_L\cap L)$ so that $\prod_{N\in \cC(L)} g_L\in N$ but $\prod_{N\in \cC(L)} g_L\notin L'$ for any $L'$ such that $N\in \cC(L')$.  Thus, $\prod_{N\in \cC(L)} g_L\in N_\circ$.

(b) Since $|\ker(\tS)|=|\Cl|=|\Ch|$, we must have that all sets $X^{N^\bullet}$ are nonempty.

(c) We have that $\underline{\cC(N)-\{O\}}\neq \underline{\cC(N)}$ for $O\in \cC(N)$ if and only if $\underline{\cC(N)-\{O\}}\cap O=N$.  But the latter condition follows easily from distributivity.
\end{proof}

Since the dual of a distributive lattice is distributive, we obtain the dual to Lemma \ref{ProductToCover} (which seems somewhat harder to prove directly).

\begin{corollary} \label{CoversToIrreducibles} Let $\ker(\tS)=J^\vee(\cI)$ be a distributive lattice and $\cA\in\anti(\cI)$.  Then
\begin{enumerate}
\item[(a)] For each $O\in \cC(\underline{\cA})$, there exists a unique element $P_O\in \cA$ such that $P_O O\in \cC(P_O)$,
\item[(b)] The function 
$$\begin{array}{c@{\ }c@{\ }c} 
\dd\cC(\underline{\cA}) & \longrightarrow & \cA\\
 O & \mapsto & P_O\\
 \underline{\cC(P)}\cap \underline{\cA-\{P\}} &\mapsfrom & P
\end{array}$$
is a bijection.
\end{enumerate}
\end{corollary}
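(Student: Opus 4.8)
The plan is to obtain this corollary as the exact order-dual of Lemma \ref{ProductToCover}, exploiting that the dual of a finite distributive lattice is again distributive. The key observation is that, although Lemma \ref{ProductToCover} is phrased for normal subgroups, its proof uses nothing beyond the finite distributive lattice structure of $\ker(\tS)$: the symbols $\overline{\cdot}$ and $\underline{\cdot}$ are the join and meet, $\cC(\cdot)$ records covers, ``product irreducible'' means join irreducible (an element covering a unique element below it), and the one essential identity $KL\cap L'=(K\cap L')(L\cap L')$ is a direct instance of the distributive law. Consequently Lemma \ref{ProductToCover} holds verbatim in any finite distributive lattice, reading ``product'' as join, ``intersection'' as meet, and $\cP$ as the poset of join-irreducibles; in particular the group-theoretic hypothesis ``lattice for a supercharacter theory'' is inessential.

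First I would apply this lattice-theoretic form of Lemma \ref{ProductToCover} to the order-dual $\ker(\tS)^{\op}$, which is distributive. The relevant dictionary is: join and meet are interchanged (so $\overline{\cdot}$ and $\underline{\cdot}$ trade places), the covering relation is reversed (the covers $\cC(X)$ of an element in $\ker(\tS)$ become the elements it covers in the dual, and conversely), and the join-irreducibles of $\ker(\tS)^{\op}$ are precisely the meet-irreducibles $\cI$ of $\ker(\tS)$. An antichain of join-irreducibles of the dual is exactly an antichain $\cA\in\anti(\cI)$ (comparability is unchanged under dualizing), and its join in the dual is $\underline{\cA}$. Under this translation the hypothesis ``$\overline{\cB}\in\cC(L)$'' becomes ``$\underline{\cA}$ covers $L$ in the dual,'' i.e. $L\in\cC(\underline{\cA})$; renaming $L=O$, part (a) of Lemma \ref{ProductToCover} then yields a unique element $K_L=P_O\in\cA$ such that the dual of ``$K_L\in\cC(K_L\cap L)$'' holds, and that dual statement is precisely ``$P_OO$ covers $P_O$,'' that is $P_OO\in\cC(P_O)$. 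For part (b), the unique element $M_K$ that $K$ covers in $\ker(\tS)^{\op}$ is the unique element covering $P=K$ in $\ker(\tS)$; since $P$ is meet-irreducible this is the single member of $\cC(P)$, written $\underline{\cC(P)}$. The inverse $K\mapsto M_K\overline{\cB-K}$ of Lemma \ref{ProductToCover}, interpreted in the dual (where the join $\overline{\cdot}$ becomes the meet $\underline{\cdot}$), reads $P\mapsto\underline{\cC(P)}\cap\underline{\cA-\{P\}}$, matching the displayed inverse.

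The main obstacle here is not any hard mathematics but the disciplined bookkeeping of the duality. I would spend the bulk of the argument confirming that every ingredient of the proof of Lemma \ref{ProductToCover} is genuinely order-theoretic, so that the lemma transfers to $\ker(\tS)^{\op}$, and then verifying that each hypothesis and conclusion transforms as claimed—most delicately, that the covering condition $P_OO\in\cC(P_O)$ is the faithful dual of ``$K_L$ covers $K_L\cap L$,'' and that the single cover of a meet-irreducible $P$ is correctly encoded by $\underline{\cC(P)}$ in the inverse map. Once the dictionary is pinned down, parts (a) and (b) follow immediately, which is exactly why the direct proof would be more awkward than the dualization.
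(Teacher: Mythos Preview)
Your proposal is correct and follows exactly the paper's own approach: the paper simply remarks that since the dual of a distributive lattice is distributive, the corollary is the order-dual of Lemma~\ref{ProductToCover}, and gives no further argument. You have carefully unpacked the duality dictionary (join-irreducibles $\leftrightarrow$ meet-irreducibles, $\overline{\cdot}\leftrightarrow\underline{\cdot}$, the reversal of covers, and the translation of $M_K$ into $\underline{\cC(P)}$), which is precisely what the paper leaves to the reader.
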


Let $H\subseteq G$ be a subgroup and suppose $J^\vee(\cI_H)\subseteq \ker(H)$ and $J^\vee(\cI_G)\subseteq \ker(G)$ are distributive lattices.  For the restriction functor 
$$\Res_H^G: \f(G;\Cl(J^\vee(\cI_G))\longrightarrow  \f(H;\Cl(J^\vee(\cI_H))$$
to be well-defined, we minimally require that for each $N\in J^\vee(\cI_G)$, there exists a subset $A\subseteq J^\vee(\cI_H)$ such that 
$$N_\circ\cap H=\bigcup_{M\in A}M_\circ.$$
For the restriction result below, we will want slightly stronger compatibility between the lattices.  In particular, we say  $J^\vee(\cI_G)$ and $J^\vee(\cI_H)$ are \textbf{\emph{restriction favorable}} if
\begin{enumerate}
\item[(R1)]  The function
$$\begin{array}{r@{\ }c@{\ }c@{\ }c}\cdot\cap H:&  J^\vee(\cI_G) & \longrightarrow & J^\vee(\cI_H)\\
&N & \mapsto & N\cap H
\end{array}$$
is well-defined (that is, $N\cap H\in J^\vee(\cI_H)$),
\item[(R2)] If $M,N\in J^\vee(\cI_G)$ with $N\in \cC(M)$, then either $M\cap H=N\cap H$ or $N\cap H\in \cC(M\cap H)$,
\end{enumerate}

\begin{remark}
At first glance it seems that (R2) should always hold due to the diamond isomorphism theorem.  In fact, if we use the full lattice of normal subgroups for both groups this is the case.  However, (R2) guarantees that the lattice for $G$ is not to coarse with respect to the lattice of $H$.
\end{remark}

\begin{examples}\hfill

\vspace{.25cm}

\noindent\textbf{Cyclics.}
For $C_m\subseteq C_n$, we have $\cdot \cap C_m: \ker(C_n)\longrightarrow \ker(C_m)$  is well-defined, since $\ker(C_m)$ is in fact an interval in $\ker(C_n)$.  The other condition also follow easily, using (\ref{CyclicIrreducibles}) and modularity of the lattice.

\vspace{.5cm}

\noindent\textbf{Vector spaces.}  For $\subsp_\cB(V)$, let $\cA\subseteq \cB$.  Then $U=\FF_q\spanning\{a\in \cA\}$ has lattice $\subsp_\cA(U)$ an interval in $\subsp_\cB(V)$.  So as with the cyclics case, $V$ and $U$ are restriction favorable.
\end{examples}

Let
\begin{equation*}
\cA_{H}=\{P_{O\cap H}\mid O\in \cC(\underline{\cA}), O\cap H\neq \underline{\cA}\cap H\}\subseteq \cI_H.
\end{equation*}

\begin{theorem} \label{RestrictionTheorem} Let $H\subseteq G$ be a subgroup, and suppose $J^\vee(\cI_G)\subseteq \ker(G)$ and $J^\vee(\cI_H)\subseteq \ker(H)$ are restriction favorable.  For $\cA\in\anti(\cI_G)$,
\begin{enumerate}
\item[(a)]  The restriction of $\chi^{\underline{\cA}^\bullet}$ factors as
$$\frac{\Res^G_H(\chi^{\underline{\cA}^\bullet})}{\chi^{\underline{\cA}^\bullet}(1)}=\frac{\chi^{\underline{\cA_H}^\bullet}}{\chi^{\underline{\cA_H}^\bullet}(1)}\odot \frac{\chi^{\overline{\cC(\underline{\cA})}\cap H}}{\chi^{\overline{\cC(\underline{\cA})}\cap H}(1)}.$$
\item[(b)] The restriction of $\chi^{\underline{\cA}^\bullet}$  decomposes as
$$\frac{\Res^{G}_{H}(\chi^{\underline{\cA}^{\bullet}})}{\chi^{\underline{\cA}^\bullet}(1)}=\sum_{\underline{\cA_H} \supseteq K\supseteq \overline{\cC(\underline{\cA})}\cap\underline{\cA_H}}\frac{|\overline{\cC(\underline{\cA})}\cap \underline{\cA_H} |(-1)^{|\{Q\in \cC(K)\mid Q\subseteq \underline{\cA_H}\}|}}{|\overline{\cC(K)}\cap \underline{\cA_H}|\chi^{K^\bullet}((\overline{\cC(K)}\cap \underline{\cA_H})_\circ)}\chi^{K^\bullet},$$
where all the terms have nonzero coefficients.
\end{enumerate}
\end{theorem}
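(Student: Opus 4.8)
The plan is to prove part (a) first and then obtain part (b) as a computational consequence, using the multiplicative character formula of Corollary \ref{CharacterFormula} as the central tool. For part (a), I would begin by identifying where each of the three factors is supported. By Corollary \ref{CoversToIrreducibles}, the covers $O\in\cC(\underline{\cA})$ in $G$ correspond bijectively to the irreducibles $P_O\in\cA$, and restriction favorability (R1, R2) guarantees that $\cdot\cap H$ carries this cover structure down to $H$ in a controlled way: each $O$ either collapses ($O\cap H=\underline{\cA}\cap H$) or produces a genuine cover $O\cap H\in\cC(\underline{\cA}\cap H)$. The set $\cA_H$ is precisely the collection of surviving irreducibles, so the two factors on the right split the covers of $\underline{\cA}$ into those ``seen'' inside $H$ (governing $\chi^{\underline{\cA_H}^\bullet}$) and a correction term $\chi^{\overline{\cC(\underline{\cA})}\cap H}$ accounting for the product structure. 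I would evaluate both sides at an arbitrary $h\in H$ and show the supports and the multiplicative values agree factor-by-factor over the covers, leaning on the fact that in a distributive lattice $\cC(\underline{\cA})$ is automatically in general position (the Corollary following Lemma \ref{ProductToCover}), so Corollary \ref{CharacterFormula} applies to every supercharacter in sight.

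The heart of part (a) is matching the minimal-subset data $\cB=\min\{B\subseteq\cC(M)\mid g\in\overline{B}\}$ across the restriction. For $h\in H$, I would express $h$ through its components relative to the covers of $\underline{\cA}$ in $G$, then track how intersecting with $H$ partitions those covers into the two families. Because $K_L$ and $P_O$ (from Lemma \ref{ProductToCover} and Corollary \ref{CoversToIrreducibles}) give explicit bijections between covers and irreducibles on both the product side and the intersection side, I can rewrite each local factor $1/(1-|N/M|)$ in terms of the corresponding index in $H$. The degree ratios $\chi^{\underline{\cA}^\bullet}(1)$, $\chi^{\underline{\cA_H}^\bullet}(1)$, and $\chi^{\overline{\cC(\underline{\cA})}\cap H}(1)$ are handled by Corollary \ref{SuperDegreeFormula} and the subgroup-character formula (\ref{SubgroupCharacterFormula}), and the claimed identity reduces to checking that the product over $\cC(\underline{\cA})$ factors as a product over $\cA_H$ times a product over the collapsed covers—exactly the content of the bijections.

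For part (b), I would start from the factored form in (a) and expand the correction factor $\chi^{\overline{\cC(\underline{\cA})}\cap H}/\chi^{\overline{\cC(\underline{\cA})}\cap H}(1)$ using the decomposition (\ref{SubgroupCharacterDecomposition}), which writes the subgroup character $\chi^{\overline{\cC(\underline{\cA})}\cap H}$ as a sum $\sum_{O\supseteq\overline{\cC(\underline{\cA})}\cap H}\chi^{O^\bullet}$ of supercharacters of $H$. Multiplying this sum against $\chi^{\underline{\cA_H}^\bullet}$ and invoking Corollary \ref{TensorProduct} (whose hypotheses I would verify hold here via the restriction-favorable assumptions and distributivity) collapses each point-wise product into a single supercharacter $\chi^{K^\bullet}$ indexed by $K$ ranging over the interval $\underline{\cA_H}\supseteq K\supseteq\overline{\cC(\underline{\cA})}\cap\underline{\cA_H}$. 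The coefficient is then assembled from the degree ratios produced by Corollary \ref{TensorProduct} together with the sign $(-1)^{|\{Q\in\cC(K)\mid Q\subseteq\underline{\cA_H}\}|}$ coming from the M\"obius-type contribution in the cover formula, and the explicit value $\chi^{K^\bullet}((\overline{\cC(K)}\cap\underline{\cA_H})_\circ)$ from Corollary \ref{CharacterFormula}.

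The main obstacle I anticipate is part (b): ensuring that every point-wise product in the expansion genuinely satisfies the hypotheses of Corollary \ref{TensorProduct}—namely that for each relevant pair the covers of the meet split cleanly between the two factors—and verifying that no two distinct $K$ produce the same supercharacter (so the coefficients are well-defined and, as claimed, all nonzero). Controlling the index range and confirming that the combinatorial coefficient simplifies to the stated closed form will require careful bookkeeping with the bijections of Lemma \ref{ProductToCover} and Corollary \ref{CoversToIrreducibles}; this is where distributivity must be used most delicately, since it is what guarantees the general-position and clean-splitting conditions throughout.
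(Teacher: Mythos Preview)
Your overall strategy matches the paper's: prove (a) by a pointwise comparison using Corollary~\ref{CharacterFormula}, then derive (b) by expanding $\chi^{\overline{\cC(\underline{\cA})}\cap H}$ via (\ref{SubgroupCharacterDecomposition}) and applying Corollary~\ref{TensorProduct} term by term. Part (a) in your outline is essentially what the paper does.

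There is, however, a genuine gap in your plan for (b). After expanding $\chi^{\overline{\cC(\underline{\cA})}\cap H}=\sum_{N\supseteq \overline{\cC(\underline{\cA})}\cap H}\chi^{N^\bullet}$ and applying Corollary~\ref{TensorProduct}, the $N$th term becomes $\chi^{N^\bullet}(1)\,\chi^{(N\cap\underline{\cA_H})^\bullet}/\chi^{(N\cap\underline{\cA_H})^\bullet}(1)$. You then have to \emph{group} the summands by $K=N\cap\underline{\cA_H}$, and many distinct $N$ land on the same $K$. Your worry that ``no two distinct $K$ produce the same supercharacter'' is backwards: distinct $K$ trivially give distinct $\chi^{K^\bullet}$; the actual issue is that distinct $N$ give the same $K$, so the coefficient of $\chi^{K^\bullet}$ is the degree sum
\[
\sum_{\substack{N\supseteq \overline{\cC(\underline{\cA})}\cap H\\ N\cap\underline{\cA_H}=K}} \chi^{N^\bullet}(1),
\]
and this is not something you can read off from Corollary~\ref{TensorProduct} or from ``M\"obius-type'' bookkeeping with Lemma~\ref{ProductToCover} and Corollary~\ref{CoversToIrreducibles} alone. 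The paper handles exactly this sum with Theorem~\ref{DegreeSumTheorem} (packaged as Lemma~\ref{RestrictionCoefficient}), which identifies the relevant covers $\cC_{\underline{\cA_H}}^\perp(K,\overline{\cC(\underline{\cA})}\cap H)$ and produces the closed form whose simplification yields the sign $(-1)^{|\{Q\in\cC(K)\mid Q\subseteq\underline{\cA_H}\}|}$ and the factor $\chi^{K^\bullet}((\overline{\cC(K)}\cap\underline{\cA_H})_\circ)$. Without invoking Theorem~\ref{DegreeSumTheorem} (or reproving its content), you will not be able to evaluate that coefficient, nor to see that it is nonzero for every $K$ in the interval.
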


Before proving the theorem, we will first need a lemma that computes degree sums using Theorem \ref{DegreeSumTheorem}.

\begin{lemma}\label{RestrictionCoefficient}
Let $H\subseteq G$ be a subgroup, and suppose $J^\vee(\cI_G)\subseteq \ker(G)$ and $J^\vee(\cI_H)\subseteq \ker(H)$ are restriction favorable.  Let $\cA\in\anti(\cI_G)$ and $\underline{\cA_H}\supseteq K\supseteq \overline{\cC(\underline{\cA})}\cap \underline{\cA_H}$.  Then
$$\frac{1}{\chi^{\overline{\cC(\underline{\cA})}\cap H}(1)}\Big( \sum_{N\supseteq \overline{\cC(\underline{\cA})}\cap H\atop N\cap \underline{\cA_H}=K  } \chi^{N^\bullet}(1)\Big)\frac{1}{\chi^{K^\bullet}(1)}=\frac{|\overline{\cC(\underline{\cA})}\cap \underline{\cA_H} |(-1)^{|\{Q\in \cC(K)\mid Q\subseteq \underline{\cA_H}\}|}}{|\overline{\cC(K)}\cap \underline{\cA_H}|\chi^{K^\bullet}((\overline{\cC(K)}\cap \underline{\cA_H})_\circ)}.$$
\end{lemma}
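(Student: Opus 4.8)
The plan is to recognize the bracketed degree sum as a direct instance of Theorem \ref{DegreeSumTheorem} applied inside the distributive lattice $J^\vee(\cI_H)$, and then to translate every quantity the theorem produces into the data on the right-hand side. Concretely, I would set $M=\overline{\cC(\underline{\cA})}\cap H$, $L=\underline{\cA_H}$, and keep $K$, so that the two constraints $N\supseteq M$ and $N\cap L=K$ match the theorem's index set verbatim (note $N\cap H\in J^\vee(\cI_H)$ is guaranteed by (R1)). First I would verify the hypotheses: since $J^\vee(\cI_H)$ is distributive, general position of $\cC_L^\perp(K,M)$ is automatic, and modularity together with $K\subseteq L$ gives $KM\cap L=K(M\cap L)=K\big(\overline{\cC(\underline{\cA})}\cap\underline{\cA_H}\big)=K$, because $K\supseteq\overline{\cC(\underline{\cA})}\cap\underline{\cA_H}$. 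Hence we land in the nonzero branch of Theorem \ref{DegreeSumTheorem}.

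The core lattice computation is to describe $\cC_L^\perp(K,M)=\{O\in\cC(KM)\mid O\cap L\neq K\}$. I expect a bijection $\{Q\in\cC(K)\mid Q\subseteq\underline{\cA_H}\}\to\cC_L^\perp(K,M)$ given by $Q\mapsto QM$, with inverse $O\mapsto O\cap L$; this is precisely the cover-matching already exploited in the proof of Theorem \ref{DegreeSumTheorem}. Granting it, the diamond isomorphism together with $Q\cap M\subseteq M\cap L=\overline{\cC(\underline{\cA})}\cap\underline{\cA_H}\subseteq K$ forces $Q\cap KM=K$, whence $|O/KM|=|Q/K|$; and taking products yields $\overline{\cC_L^\perp(K,M)}=M\big(\overline{\cC(K)}\cap\underline{\cA_H}\big)$, using the distributive identity $\overline{\cC(K)}\cap\underline{\cA_H}=\overline{\{Q\in\cC(K)\mid Q\subseteq\underline{\cA_H}\}}$. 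In particular $|\cC_L^\perp(K,M)|=|\{Q\in\cC(K)\mid Q\subseteq\underline{\cA_H}\}|$ supplies the sign exponent.

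With these identifications, Theorem \ref{DegreeSumTheorem} evaluates the bracketed sum to $\big|H/M(\overline{\cC(K)}\cap\underline{\cA_H})\big|\prod_{Q\in\cC(K),\,Q\subseteq\underline{\cA_H}}(|Q/K|-1)$. I would then evaluate the normalizing factors: $\chi^{\overline{\cC(\underline{\cA})}\cap H}(1)=|H/M|$ by (\ref{SubgroupCharacterFormula}); $\chi^{K^\bullet}(1)=|H/\overline{\cC(K)}|\prod_{Q\in\cC(K)}(|Q/K|-1)$ by Corollary \ref{SuperDegreeFormula}; and, by Corollary \ref{CharacterFormula} applied with minimal set $\cB=\{Q\in\cC(K)\mid Q\subseteq\underline{\cA_H}\}$ (which is minimal exactly because the covers of $K$ lying in $\overline{\cC(K)}\cap\underline{\cA_H}$ are the $Q\subseteq\underline{\cA_H}$), the value $\chi^{K^\bullet}((\overline{\cC(K)}\cap\underline{\cA_H})_\circ)=\chi^{K^\bullet}(1)(-1)^{|\cB|}\prod_{Q\in\cB}(|Q/K|-1)^{-1}$. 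Substituting and cancelling the common factor $\prod_{Q\subseteq\underline{\cA_H}}(|Q/K|-1)/\chi^{K^\bullet}(1)$, the claimed identity reduces to the single index equality $|M|/\big|M(\overline{\cC(K)}\cap\underline{\cA_H})\big|=|\overline{\cC(\underline{\cA})}\cap\underline{\cA_H}|/|\overline{\cC(K)}\cap\underline{\cA_H}|$, which follows from the diamond isomorphism once one checks $M\cap\overline{\cC(K)}\cap\underline{\cA_H}=\overline{\cC(\underline{\cA})}\cap\underline{\cA_H}$; this uses $M\cap\underline{\cA_H}=\overline{\cC(\underline{\cA})}\cap\underline{\cA_H}$ (as $\underline{\cA_H}\subseteq H$) and the inclusion $\overline{\cC(\underline{\cA})}\cap\underline{\cA_H}\subseteq K\subseteq\overline{\cC(K)}$.

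The main obstacle I anticipate is establishing the cover bijection $Q\mapsto QM$ and verifying that each $QM$ genuinely covers $KM$ rather than collapsing into it, which is where the restriction-favorable hypotheses and distributivity must be used carefully; once that structural step is secured, the remainder is the bookkeeping cancellation sketched above. I would also record the degenerate case $\cC_L^\perp(K,M)=\emptyset$ separately, where $\cB=\emptyset$, the empty products are $1$, the sign is $(-1)^0$, and both sides collapse to $1/\chi^{K^\bullet}(1)$.
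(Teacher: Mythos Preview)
Your proposal is correct and follows essentially the same route as the paper: both set $M=\overline{\cC(\underline{\cA})}\cap H$, $L=\underline{\cA_H}$ and apply Theorem~\ref{DegreeSumTheorem}, both identify $\cC_L^\perp(K,M)$ bijectively with $\{Q\in\cC(K)\mid Q\subseteq\underline{\cA_H}\}$ via $Q\mapsto QM$ and $O\mapsto O\cap L$, and both finish by the same index manipulations (the paper's chain of equalities for $|\overline{\cC(K)}|/|\overline{\cC_{\underline{\cA_H}}^\perp(K,\overline{\cC(\underline{\cA})}\cap H)}|$ is exactly your diamond computation $M\cap\overline{\cC(K)}\cap\underline{\cA_H}=\overline{\cC(\underline{\cA})}\cap\underline{\cA_H}$, repackaged). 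The only cosmetic difference is that the paper treats the degenerate case $K=\underline{\cA_H}$ up front while you defer it to the end; your parenthetical invocation of (R1) is superfluous since the sum already ranges over $N\in J^\vee(\cI_H)$, but this is harmless.
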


\begin{proof} Suppose $N\cap \underline{\cA_H}=K$ for all $N\supseteq K(\overline{\cC(\underline{\cA})}\cap H)$.  Then $N=H$ implies $K=H\cap \underline{\cA_H}=\underline{\cA_H}$.   In this case, by Theorem \ref{DegreeSumTheorem}, 
\begin{align*}\frac{1}{\chi^{\overline{\cC(\underline{\cA})}\cap H}(1)}\Big( \sum_{N\supseteq \overline{\cC(\underline{\cA})}\cap H\atop N\cap \underline{\cA_H}=\underline{\cA_H}  } \chi^{N^\bullet}(1)\Big)\frac{1}{\chi^{\underline{\cA_H}^\bullet}(1)}&=\frac{|\overline{\cC(\underline{\cA})}\cap H|}{|H|}\frac{|H|}{|\underline{\cA_H}(\overline{\cC(\underline{\cA})}\cap H)|}\frac{1}{\chi^{\underline{\cA_H}^\bullet}(1)}\\ &=\frac{|\overline{\cC(\underline{\cA})}\cap \underline{\cA_H}|}{|\underline{\cA_H}|}\frac{(-1)^0}{\chi^{\underline{\cA_H}^\bullet}(\underline{\cA_H}_\circ)}\\
&=\frac{|\overline{\cC(\underline{\cA})}\cap \underline{\cA_H}|}{|\overline{\cC(\underline{\cA_H})}\cap \underline{\cA_H}|}\frac{(-1)^{|\{Q\in \cC(\underline{\cA_H}\})\mid Q\subseteq \underline{\cA_H}\}|}}{\chi^{\underline{\cA_H}^\bullet}((\overline{\cC(\underline{\cA_H})}\cap \underline{\cA_H})_\circ)}.
\end{align*}
The other terms all satisfy $N\cap\underline{\cA_H}\neq K$ for some $N\supseteq (\overline{\cC(\underline{\cA})}\cap H)$.

 By Theorem \ref{DegreeSumTheorem} and Corollary \ref{SuperDegreeFormula},
\begin{align*}
\frac{1}{\chi^{\overline{\cC(\underline{\cA})}\cap H}(1)}&\Big( \sum_{N\supseteq \overline{\cC(\underline{\cA})}\cap H\atop N\cap \underline{\cA_H}=K  } \chi^{N^\bullet}(1)\Big)\frac{1}{\chi^{K^\bullet}(1)}\\&=\frac{|\overline{\cC(\underline{\cA})}\cap H||H|}{|H||\overline{\cC_{\underline{\cA_H}}^\perp(K,\overline{\cC(\underline{\cA})}\cap H)}|} \frac{\dd\prod_{O\in \cC_{\underline{\cA_H}}^\perp(K,\overline{\cC(\underline{\cA})}\cap H)} \Big(\frac{|O|}{|K(\overline{\cC(\underline{\cA})}\cap H)|}-1\Big)}{\dd\prod_{Q\in \cC(K)} \Big(\frac{|Q|}{|K|}-1\Big)}\frac{|\overline{\cC(K)}|}{|H|}.
\end{align*}
However, for every $O\in \cC_{\underline{\cA_H}}^\perp(K,\overline{\cC(\underline{\cA})}\cap H)$, $O\cap \underline{\cA_H}\in \cC(K)$ with 
$$\frac{|O|}{|K(\overline{\cC(\underline{\cA})}\cap H)|}=\frac{|O\cap \underline{\cA_H}|}{|K|},$$
and every cover of $K$ sitting in $\underline{\cA_H}$ appears in this way.  Thus,
$$\frac{1}{\chi^{\overline{\cC(\underline{\cA})}\cap H}(1)}\Big( \sum_{N\supseteq \overline{\cC(\underline{\cA})}\cap H\atop N\cap \underline{\cA_H}=K  } \chi^{N^\bullet}(1)\Big)\frac{1}{\chi^{K^\bullet}(1)}=\frac{|\overline{\cC(\underline{\cA})}\cap H|}{|\overline{\cC_{\underline{\cA_H}}^\perp(K,\overline{\cC(\underline{\cA})}\cap H)}|}\prod_{Q\in \cC(K)\atop Q\nsubseteq \underline{\cA_H}} \Big(\frac{|Q|}{|K|}-1\Big)^{-1}\frac{|\overline{\cC(K)}|}{|H|}.$$
Also,
$$\frac{|\overline{\cC_{\underline{\cA_H}}^\perp(K,\overline{\cC(\underline{\cA})}\cap H)}|}{|K(\overline{\cC(\underline{\cA})}\cap H)|}=\frac{|\overline{\{Q\in \cC(K)\mid Q\subseteq \underline{\cA_H}\}}|}{|K|},$$
and by distributivity
$$\overline{\{Q\in \cC(K)\mid Q\subseteq \underline{\cA_H}\}}\cap \overline{\{Q\in \cC(K)\mid Q\nsubseteq \underline{\cA_H}\}}=K,$$
so
\begin{align*}
\frac{|\overline{\cC(K)}|}{|\overline{\cC_{\underline{\cA_H}}^\perp(K,\overline{\cC(\underline{\cA})}\cap H)}|}&=\frac{|\overline{\cC(K)}|}{|\overline{\{Q\in \cC(K)\mid Q\subseteq \underline{\cA_H}\}}|}\frac{{|\overline{\{Q\in \cC(K)\mid Q\subseteq \underline{\cA_H}\}}|}}{|\overline{\cC_{\underline{\cA_H}}^\perp(K,\overline{\cC(\underline{\cA})}\cap H)}|}\\
&=\frac{|\overline{\cC(K)}|}{|\overline{\{Q\in \cC(K)\mid Q\subseteq \underline{\cA_H}\}}|}\frac{|K|}{
|K(\overline{\cC(\underline{\cA})}\cap H)|}\\
&=\frac{|\overline{\cC(K)}||\overline{\cC(\underline{\cA})}\cap \underline{\cA_H}|}{|\overline{\{Q\in \cC(K)\mid Q\subseteq \underline{\cA_H}\}}|
|(\overline{\cC(\underline{\cA})}\cap H)|}.
\end{align*}
Since 
$$\overline{\cC(K)}\cap \underline{\cA_H}=\Big(\prod_{Q\in \cC(K)\atop Q\nsubseteq \cA_H} K\Big)\Big(\prod_{Q\in \cC(K)\atop Q\subseteq \cA_H} Q\Big)\cap \underline{\cA_H}=\overline{\{Q\in \cC(K)\mid Q\subseteq \underline{\cA_H}\}},$$
we have
\begin{align*}
\frac{1}{\chi^{\overline{\cC(\underline{\cA})}\cap H}(1)}\Big( \sum_{N\supseteq \overline{\cC(\underline{\cA})}\cap H\atop N\cap \underline{\cA_H}=K  } \chi^{N^\bullet}(1)\Big)\frac{1}{\chi^{K^\bullet}(1)}&=\frac{|\overline{\cC(K)}||\overline{\cC(\underline{\cA})}\cap \underline{\cA_H}|}{|H||\overline{\cC(K)}\cap \underline{\cA_H}|}\prod_{Q\in \cC(K)\atop Q\nsubseteq \underline{\cA_H}} \Big(\frac{|Q|}{|K|}-1\Big)^{-1}\\
&=\frac{|\overline{\cC(\underline{\cA})}\cap \underline{\cA_H} |(-1)^{|\{Q\in \cC(K)\mid Q\subseteq \underline{\cA_H}\}|}}{|\overline{\cC(K)}\cap \underline{\cA_H}|\chi^{K^\bullet}((\overline{\cC(K)}\cap \underline{\cA_H})_\circ)},
\end{align*}
as desired.
\end{proof}
\begin{proof}[Proof of Theorem \ref{RestrictionTheorem}] 
(a) Note that by definition $\underline{\cA_H} \supseteq \underline{\cA}\cap H$, and since covers of $\underline{\cA}\cap H$ either are in $\underline{\cA_H} $ or generate covers of $\underline{\cA_H}$, we conclude that $\overline{\cC(\underline{\cA_H})}\supseteq \overline{\cC(\underline{\cA}\cap H)}$.  Thus, both sides are 0 if and only if $g\notin\overline{\cC(\underline{\cA})}\cap H$.

 Let $g\in \overline{\cB}\cap H$ for $\cB\subseteq \cC(\underline{\cA})$ minimal.  If $B\in \cB$ such that $B\cap H=\underline{\cA}\cap H$, then by distributivity, $g\in \overline{\cB-\{B\}}\cap H$.  Thus, we may assume that $g\in  \overline{\cB} \cap H$ where $\cB\subseteq \{O\in \cC(\underline{\cA})\mid O\cap H\neq \underline{\cA}\cap H\}.$  Then
 \begin{align*}
\Big(\frac{\chi^{\underline{\cA_H}^\bullet}}{\chi^{\underline{\cA_H}^\bullet}(1)}\odot \frac{\chi^{\overline{\cC(\underline{\cA})}\cap H}}{\chi^{\overline{\cC(\underline{\cA})}\cap H}(1)}\Big)(g)&=\prod_{O\in \cB} \frac{1}{1-|(O\cap H)\underline{\cA_H}/\underline{\cA_H}|}\\
&=\prod_{O\in \cB}\frac{1}{1-|(O\cap H)/(\underline{\cA}\cap H)|}\\
&=\prod_{O\in \cB}\frac{1}{1-|O/\underline{\cA}|}\\
&=\frac{\Res^G_H(\chi^{\underline{\cA}^\bullet})}{\chi^{\underline{\cA}^\bullet}(1)} (g).
\end{align*}

(b) We next want to use apply Corollary \ref{TensorProduct}.  Thus, we need to show that for $N\supseteq \overline{\cC(\underline{\cA})}\cap H$ and $O\in \cC(N\cap \underline{\cA_H})$, we have $O\subseteq N$ or $O\subseteq \underline{\cA_H}$.  Since $O$ covers $N\cap \underline{\cA_H}$ we have $O\underline{\cA_H}=\underline{\cA_H}$ or $O\underline{\cA_H}$ covers $\underline{\cA_H}$.  In the first case, $O\subseteq \underline{\cA_H}$.  In the second, $O\underline{\cA_H}$ corresponds to  a unique cover $O'$ of $\underline{\cA}\cap H$, with $O'\underline{\cA_H}=O\underline{\cA_H}$.  But then $O'(N\cap\underline{\cA_H})=O$ and $O\subseteq N$ (since $O'\subseteq \overline{\cC(\underline{\cA})}\cap H\subseteq N$). 

Therefore we can apply  Corollary \ref{TensorProduct} to our situation to obtain
\begin{align*}
\frac{\Res^G_H(\chi^{\underline{\cA}^\bullet})}{\chi^{\underline{\cA}^\bullet}(1)}&=\frac{\chi^{\underline{\cA_H}^\bullet}}{\chi^{\underline{\cA_H}^\bullet}(1)}\odot \frac{\chi^{\overline{\cC(\underline{\cA})}\cap H}}{\chi^{\overline{\cC(\underline{\cA})}\cap H}(1)}\\
&=\frac{\chi^{\underline{\cA_H}^\bullet}}{\chi^{\underline{\cA_H}^\bullet}(1)\chi^{\overline{\cC(\underline{\cA})}\cap H}(1)}\odot\sum_{N\supseteq \overline{\cC(\underline{\cA})}\cap H} \chi^{N^\bullet}\\
&=\frac{1}{\chi^{\overline{\cC(\underline{\cA})}\cap H}(1)}\sum_{N\supseteq \overline{\cC(\underline{\cA})}\cap H} \chi^{N^\bullet}(1)\frac{\chi^{(N\cap \underline{\cA_H})^\bullet}}{\chi^{(N\cap \underline{\cA_H})^\bullet}(1)}.
\end{align*}
Reorganizing,
\begin{equation*}
\frac{\Res^G_H(\chi^{\underline{\cA}^\bullet})}{\chi^{\underline{\cA}^\bullet}(1)}=\frac{1}{\chi^{\overline{\cC(\underline{\cA})}\cap H}(1)}\sum_{\underline{\cA_H}\supseteq K\supseteq\overline{\cC(\underline{\cA})}\cap \underline{\cA_H}}\Big( \sum_{N\supseteq \overline{\cC(\underline{\cA})}\cap H\atop N\cap \underline{\cA_H}=K  } \chi^{N^\bullet}(1)\Big)\frac{\chi^{K^\bullet}}{\chi^{K^\bullet}(1)}.
\end{equation*}
Note that 
$$K(\overline{\cC(\underline{\cA})}\cap H)\cap \underline{\cA_H}=(K\cap \underline{\cA_H})(\overline{\cC(\underline{\cA})}\cap \underline{\cA_H})=K(\overline{\cC(\underline{\cA})}\cap \underline{\cA_H}),$$
which equals $K$ if and only if $\overline{\cC(\underline{\cA})}\cap \underline{\cA_H}\subseteq K$.    Thus, we can apply Lemma \ref{RestrictionCoefficient} to deduce the result.
\end{proof}

\begin{corollary} \label{CollapsedCase} Under the hypotheses of Theorem \ref{RestrictionFormula},
if $\underline{\cA_H}\subseteq \overline{\cC(\underline{\cA})}$, then
$$\frac{\Res^{G}_{H}(\chi^{\underline{\cA}^{\bullet}})}{\chi^{\underline{\cA}^\bullet}(1)}=\frac{\chi^{\underline{\cA_H}^{\bullet}}}{\chi^{\underline{\cA_H}^\bullet}(1)}.$$
\end{corollary}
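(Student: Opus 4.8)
The plan is to deduce this directly from the decomposition in Theorem \ref{RestrictionTheorem}(b) by showing that the extra hypothesis $\underline{\cA_H}\subseteq \overline{\cC(\underline{\cA})}$ forces the indexing sum to collapse to a single term. The sum there runs over all $K$ with $\underline{\cA_H}\supseteq K\supseteq \overline{\cC(\underline{\cA})}\cap\underline{\cA_H}$, so the first step is to pin down exactly which $K$ survive, and the remaining work is purely the simplification of the surviving coefficient.

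First I would observe that the hypothesis yields $\overline{\cC(\underline{\cA})}\cap\underline{\cA_H}=\underline{\cA_H}$: since $\underline{\cA_H}\subseteq H$ and $\underline{\cA_H}\subseteq\overline{\cC(\underline{\cA})}$ we get $\underline{\cA_H}\subseteq \overline{\cC(\underline{\cA})}\cap H$, hence $\underline{\cA_H}\subseteq \overline{\cC(\underline{\cA})}\cap\underline{\cA_H}$, while the reverse inclusion is automatic. Consequently the constraint $\underline{\cA_H}\supseteq K\supseteq \overline{\cC(\underline{\cA})}\cap\underline{\cA_H}$ becomes $\underline{\cA_H}\supseteq K\supseteq \underline{\cA_H}$, forcing $K=\underline{\cA_H}$. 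Thus the entire right-hand side of Theorem \ref{RestrictionTheorem}(b) reduces to the single summand indexed by $K=\underline{\cA_H}$.

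It then remains to check that the coefficient of $\chi^{\underline{\cA_H}^\bullet}$ in that lone summand is exactly $1/\chi^{\underline{\cA_H}^\bullet}(1)$. Setting $K=\underline{\cA_H}$ in the coefficient formula, I would note that $\{Q\in\cC(\underline{\cA_H})\mid Q\subseteq\underline{\cA_H}\}$ is empty, since each cover $Q$ of $\underline{\cA_H}$ strictly contains $\underline{\cA_H}$; hence the sign factor is $(-1)^0=1$. Similarly $\overline{\cC(\underline{\cA_H})}\cap\underline{\cA_H}=\underline{\cA_H}$, because every cover contains $\underline{\cA_H}$ and so does their product. Together with $|\overline{\cC(\underline{\cA})}\cap\underline{\cA_H}|=|\underline{\cA_H}|$ from the first step, the index factors $|\overline{\cC(\underline{\cA})}\cap\underline{\cA_H}|$ and $|\overline{\cC(K)}\cap\underline{\cA_H}|$ both collapse to $|\underline{\cA_H}|$ and cancel.

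The one point requiring care is the character-value term $\chi^{K^\bullet}((\overline{\cC(K)}\cap\underline{\cA_H})_\circ)=\chi^{\underline{\cA_H}^\bullet}((\underline{\cA_H})_\circ)$ in the denominator: here I would invoke Corollary \ref{CharacterFormula}, which shows that a supercharacter $\chi^{M^\bullet}$ takes its degree value $\chi^{M^\bullet}(1)$ on every element of $M$ (the case $g\in M$, corresponding to empty $\cB$), and in particular on the superclass $(\underline{\cA_H})_\circ\subseteq\underline{\cA_H}$, which is nonempty by distributivity. Substituting these simplifications gives coefficient $|\underline{\cA_H}|/(|\underline{\cA_H}|\,\chi^{\underline{\cA_H}^\bullet}(1))=1/\chi^{\underline{\cA_H}^\bullet}(1)$, which is the claimed identity. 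The main (and only mild) obstacle is precisely this evaluation of the supercharacter on its own superclass; everything else is bookkeeping driven by the collapse $K=\underline{\cA_H}$.
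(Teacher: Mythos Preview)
Your proof is correct and is precisely the intended argument: the paper states the corollary without proof as an immediate consequence of Theorem \ref{RestrictionTheorem}(b), and your collapse of the sum to the single term $K=\underline{\cA_H}$ together with the coefficient simplification (which also appears, in the same form, in the $K=\underline{\cA_H}$ case of the proof of Lemma \ref{RestrictionCoefficient}) is exactly how it falls out.
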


\begin{example}\hfill

\vspace{.25cm}

\noindent\textbf{Cyclics.}
Let $C_m\subseteq C_n$ and suppose $d\mid n$.   Let
$$P=\{p\text{ prime}\mid pd\mid n\}\quad \text{and}\quad d^\vee=\prod_{p\in P}p.$$
If $n=p_1^{j_1}\cdots p_\ell^{j_\ell}$ and $d=p_1^{i_1}\cdots p_\ell^{i_\ell}$, then
$$C_d=\bigcap_{1\leq k\leq \ell\atop i_k<j_k} C_{p_k^{i_k-j_k}n},\quad\text{so}\quad \cA=\{C_{p_k^{i_k-j_k}n}\mid 1\leq k\leq \ell, i_k<j_k\},$$
and for $i_k\neq j_k$,
$$C_{p_k^{i_k-j_k}n}=\gco(C_{p_kd}),$$
and 
$$\overline{\cC(\underline{\cA})}=\prod_{p\in P} C_{pd}.$$
If $m=p_1^{h_1}\cdots p_\ell^{h_\ell}$, then
$$\cA_{C_m}=\{C_{\gcd(p_k^{i_k-j_k}n,m)}\mid 1\leq k\leq \ell, h_k>i_k\} \quad \text{and}\quad \underline{\cA_{C_m}}=C_{\gcd(d,m)}.$$
Thus,
$$\overline{\cC(\underline{\cA})}\cap \underline{\cA_{C_m}}= \prod_{p\in P} C_{\gcd(d,m)}=\underline{\cA_{C_m}},$$
so
\begin{equation*}
\frac{\Res_{C_m}^{C_n}(\chi^d)}{\chi^d(1)} =\frac{\chi^{\gcd(d,m)}}{\chi^{\gcd(d,m)}(1)}.
\end{equation*}

\vspace{.5cm}

\noindent\textbf{Vector spaces.}  Let $\cB$ be a basis of $V$ and suppose $U\subseteq V$ is subspace with a basis constructed as follows.  There exists a subset $A\subseteq \cB$ and a set partition $\bl(A)$ of $A$ such that 
$$\{\sum_{b\in B}b\mid B\in \bl(A)\}$$
is a basis of $U$.  Then $\cdot \cap U:\subsp_\cB(V)\longrightarrow \subsp_{\bl(A)}(U)$ is a well-defined function, and covers get sent to covers.  However, since $\overline{\cC(W)}=V$ for any subspace of $W$, we have that the hypotheses of Corollary \ref{CollapsedCase} are satisfied.  Thus,
$$\frac{\Res_{U}^{V}(\chi^{\cB-\{b_1,\ldots, b_\ell\}})}{\chi^{\cB-\{b_1,\ldots, b_\ell\}}(1)}=\frac{\chi^{\{\sum_{b\in B}b\mid |B\cap \{b_1,\ldots,b_\ell\}|\neq 1\}}}{\chi^{\{\sum_{b\in B}b\mid |B\cap \{b_1,\ldots,b_\ell\}|\neq 1\}}(1)}.$$
\end{example}

\vspace{2cm}
\noindent (Farid Aliniaeifard) Department of Mathematics, University of Colorado \textbf{Boulder}\\
\textsf{farid.aliniaeifard@colorado.edu}\\
\\
\noindent (Nathaniel Thiem) Department of Mathematics, University of Colorado \textbf{Boulder}\\
\textsf{thiemn@colorado.edu}


\begin{thebibliography}{9}
	


\bibitem{AL17} F. Aliniaeifard, Normal supercharacter theories and their supercharacters, J. Algebra 469 (2017), 464--484.


\bibitem{AT18} F. Aliniaeifard and N. Thiem, Pattern groups and a poset based Hopf monoid, 2018 preprint.

\bibitem{And95} C. Andr\'e, Basic characters of the unitriangular group, J. Algebra 175 (1995), 287--319.


\bibitem{An15} S. Andrews, The Hopf monoid on nonnesting supercharacters of pattern groups. J. Algebraic Combin. 42 (2015), 129--164.


\bibitem{BT15} N. Bergeron and N. Thiem, A supercharacter table decomposition via power-sum symmetric functions, Internat. J. Algebra Comput. 23 (2013), 763--778.

\bibitem{BB} C. Bessenrodt and C. Bowman, Multiplicity-free Kronecker products of characters of the symmetric groups, Adv. Math. 322 (2017), 473--529.

\bibitem{Bu} S. Burkett, Subnormality and Normal Series in Supercharacter Theory, Ph.D. Thesis, Department of Mathematics, University of Colorado Boulder, 2018.

\bibitem{BLLW} S. Burkett, J. Lamar, M.L. Lewis, and C. Wynn, Groups with exactly two supercharacter theories, Comm. Algebra 45 (2017), 977--982.
              
\bibitem{DI08} P. Diaconis, M. Isaacs, Supercharacters and superclasses for algebra groups, Trans. Amer. Math. Soc. 360 (2008), 2359--2392.

\bibitem{FGK} C.F. Fowler, S.R. Garcia, and G. Karaali, Ramanujan sums as supercharacters, Ramanujan J. 35 (2014), 205--241.

\bibitem{M11} E. Marberg, A supercharacter analogue for normality, J. Algebra 332 (2011), 334--365.

\bibitem{Yan01} N. Yan, Representation theory of the finite unipotent linear groups, Unpublished Ph.D. Thesis, Department of Mathematics, University of Pennsylvania, 2001.


\end{thebibliography}
\end{document}